\documentclass[smallextended,final]{svjour3}
\smartqed
\usepackage{setspace}
\setdisplayskipstretch{2}
\usepackage{amsmath, amssymb}
\usepackage{cases}
\usepackage[T1]{fontenc}               
\usepackage[utf8]{inputenc}            
   

\usepackage{multirow}
\usepackage{paralist}
   \setlength\plitemsep{2ex}
   \setlength\pltopsep{2ex}
\usepackage[dvipsnames]{xcolor}        
\usepackage{tikz}
   \usetikzlibrary{cd}
\usepackage{Z-fonts}
\usepackage[final,%
            colorlinks=true,%
            linkcolor=RedViolet,%
            citecolor=RedViolet,
            urlcolor=RoyalBlue]{hyperref}   

  \DeclareMathSymbol{A}{\mathalpha}{operators}{`A}
  \DeclareMathSymbol{B}{\mathalpha}{operators}{`B}
  \DeclareMathSymbol{C}{\mathalpha}{operators}{`C}
  \DeclareMathSymbol{D}{\mathalpha}{operators}{`D}
  \DeclareMathSymbol{E}{\mathalpha}{operators}{`E}
  \DeclareMathSymbol{F}{\mathalpha}{operators}{`F}
  \DeclareMathSymbol{G}{\mathalpha}{operators}{`G}
  \DeclareMathSymbol{H}{\mathalpha}{operators}{`H}
  \DeclareMathSymbol{I}{\mathalpha}{operators}{`I}
  \DeclareMathSymbol{J}{\mathalpha}{operators}{`J}
  \DeclareMathSymbol{K}{\mathalpha}{operators}{`K}
  \DeclareMathSymbol{L}{\mathalpha}{operators}{`L}
  \DeclareMathSymbol{M}{\mathalpha}{operators}{`M}
  \DeclareMathSymbol{N}{\mathalpha}{operators}{`N}
  \DeclareMathSymbol{O}{\mathalpha}{operators}{`O}
  \DeclareMathSymbol{P}{\mathalpha}{operators}{`P}
  \DeclareMathSymbol{Q}{\mathalpha}{operators}{`Q}
  \DeclareMathSymbol{R}{\mathalpha}{operators}{`R}
  \DeclareMathSymbol{S}{\mathalpha}{operators}{`S}
  \DeclareMathSymbol{T}{\mathalpha}{operators}{`T}
  \DeclareMathSymbol{U}{\mathalpha}{operators}{`U}
  \DeclareMathSymbol{V}{\mathalpha}{operators}{`V}
  \DeclareMathSymbol{W}{\mathalpha}{operators}{`W}
  \DeclareMathSymbol{X}{\mathalpha}{operators}{`X}
  \DeclareMathSymbol{Y}{\mathalpha}{operators}{`Y}
  \DeclareMathSymbol{Z}{\mathalpha}{operators}{`Z}

  \newcommand\CC{{\mathbf C}}        
  \newcommand\RR{{\mathbf R}}        
  \newcommand\ZZ{{\mathbf Z}}        
  
  \newcommand\Lie{\mathfrak}
  \newcommand\LB{{\Lie{b}}}
  \newcommand\LG{{\Lie{g}}}
  \newcommand\LGL{{\Lie{gl}}}
  \newcommand\LH{{\Lie{h}}}
  \newcommand\LP{{\Lie{p}}}
  \newcommand\LS{{\Lie{s}}}
  
  \newcommand\LT{{\Lie{t}}}
  \newcommand\LU{{\Lie{u}}}
  \newcommand\LX{{\Lie{X}}}

\let\Im\relax

\DeclareMathOperator\Ad{Ad}
\DeclareMathOperator\ad{ad}
\DeclareMathOperator\Aut{Aut}
\DeclareMathOperator\avg{avg}

\DeclareMathOperator\diag{diag}
\DeclareMathOperator\End{End}
\DeclareMathOperator\Gr{Gr}
\DeclareMathOperator\Hom{Hom}
\DeclareMathOperator\Im{Im}
\DeclareMathOperator\Inn{Int}
\DeclareMathOperator\Ker{Ker}

\DeclareMathOperator\sign{sign}
\DeclareMathOperator\Tr{Trace}
  \newcommand\1{{\underline1}}                            
  \newcommand\A{{\l_1}}                                   
  \newcommand\B{{\l_{2,3}}}                               
  \newcommand\C{{\l_4}}                                   
  \newcommand\EA{{E_A}}                                   
  \newcommand\EB{{E_B}}                                   
  \newcommand\g{{\mathrm g}}                              
  \newcommand\I{I}                                        
\renewcommand\i{{\mathrm i}}                              
  \newcommand\inv{^{-1}}                                  
\renewcommand\l{\lambda}                                  
  \newcommand\w{\omega}                                   
  \newcommand\y{y}                                        

  \newcommand\<{\langle}                                  
\renewcommand\>{\rangle}                                  
  \newcommand\+{\langle\hspace{-2.1pt}\langle}            
\renewcommand\:{\rangle\hspace{-2.1pt}\rangle}            
  \newcommand\bit[1]{\textbf{\emph{#1}}}                  
  \newcommand\bloc[2]{V_{#1|#2}}
  \newcommand\bullets{\begin{matrix}
                       \cdot&\,\,\cdot\\
                       \cdot&\,\,\cdot
                       \end{matrix}}
\renewcommand\d{{\delta}}                                 
  \newcommand\factor[2]{\lambda_{#1}-\lambda_{#2}}
  \newcommand\lined{\begin{matrix}+&+\end{matrix}}
  \newcommand\stacked{\begin{matrix}+\\+\end{matrix}}

\usepackage{etoolbox}
\makeatletter
\newcommand*{\@linkedbibitem}[1]{%
  \def\this@biblink{#1}%
  \bibitem}
\newcommand*{\linkedbibitem}{\hyper@normalise\@linkedbibitem}
\renewcommand*{\@BIBLABEL}[1]{%
  \ifdefvoid\this@biblink
    {[#1]}
    {[\expandafter\href\expandafter{\this@biblink}%
       {#1}]}}
\makeatother

\showhyphens{equivariantly Hirzebruch homogènes irreducible}
\hyphenation{Birk-häu-ser du-rch Hir-ze-bruch ho-mo-gènes Re-print}

\makeatletter
\renewcommand\tableofcontents{%
  \section*{\contentsname}%
  \begingroup
    \small
    \@starttoc{toc}%
  \endgroup
}
\makeatother

\setcounter{tocdepth}{1}
\spnewtheorem*{exam}{Example}{\it}{\rm}

\begin{document}
\title{Explicit Pseudo-Kähler Metrics on Flag Manifolds}
\author{Thomas Mason \and François Ziegler}
\institute{T. Mason \at
           Department of Mathematical Sciences,
           Georgia Southern University,
           Statesboro, GA 30460 \\
           \email{thomas\_a\_mason@georgiasouthern.edu}
           \and
           F. Ziegler (corresponding author) \at
           Department of Mathematical Sciences,
           Georgia Southern University,
           Statesboro, GA 30460 \\
           \email fziegler@georgiasouthern.edu
}
\date{November 29, 2022}

\maketitle
\begin{abstract}
   The coadjoint orbits of compact Lie groups each carry a canonical (positive definite) Kähler structure, famously used to realize the group's irreducible representations in holomorphic sections of appropriate line bundles (Borel-Weil theorem). Less studied are the (indefinite) invariant \emph{pseudo}-Kähler structures they also admit, which can be used to realize the same representations in higher cohomology of the sections (Bott's theorem). Using ``eigenflag'' embeddings, we give a very explicit description of these metrics in the case of the unitary group. As a byproduct we show that $U_n/(U_{n_1}\times\cdots\times U_{n_k})$ has exactly $k!$ invariant complex structures, a count which seems to have hitherto escaped attention.
\subclass{14M15 \and 17B08 \and 32M10 \and 32Q15 \and 53C50}
\keywords{Coadjoint orbit \and Unitary group \and Pseudo-Kähler manifold \and Homogeneous complex manifold \and Flag manifold}
\end{abstract}

\tableofcontents 

\section{Introduction}

One of A.~Borel's early claims to fame was his discovery of a complex structure on the quotient $G/T$ of a compact Lie group by its maximal torus \cite[§29]{Borel:1953}: 
\begin{quote}
   {\small On the off chance, let us point out a property common to the $G/T$ and  certain torsion free homogeneous spaces of $\mathbf U(n)$ (cf.~§31, No.~1), but without knowing whether it is related to the topological question which occupies us;
       
   \quad\emph{$G/T$ admits a complex manifold structure invariant under the homeomorphisms from $G$.}}
\end{quote}
His original argument was that $G/T$ coincides with the quotient of the complexified group by what we now call a Borel subgroup,
\begin{equation}
   \label{over_Borel}
   G/T \cong G(\CC)/B,
\end{equation}
and the pointer to §31 referred to
\begin{equation}
   \label{over_block_diagonals}
   U_n/(U_{n_1}\times\cdots\times U_{n_k})\rlap{,\qquad\quad($\sum n_i=n$)}
\end{equation}
which he described as \emph{the manifold ``of flags'' whose generating element consists of $k-1$ nested subspaces of $\CC^n$}. Soon after, H.~C.~Wang showed that the property of admitting (finitely many) invariant complex structures is characteristic of homogeneous spaces of the form
\begin{equation}
   \label{over_centralizer}
   G/C(S)
\end{equation}
with $G$ compact and $C(S)$ the centralizer of a torus $S\subset G$ \cite{Wang:1954}; Borel observed that these spaces admit invariant Kähler metrics \cite{Borel:1954}, and with Hirzebruch, gave the theory its classic exposition \cite[§§12--13]{Borel:1958}.

Notably, this contains a section 13.7 \emph{Number of invariant complex structures} which actually only gives the count in the two extreme cases where $\dim(S)=1$ or $\mathrm{rank}(G)$, i.e.~(in \eqref{over_block_diagonals}) $k=2$ or $n$.

The purpose of this paper is to fill this gap and give as explicit a description as possible of all invariant complex structures in the case of the unitary group. (Extension to other types is a seemingly intricate problem.) An ulterior motive is to get a concrete handle on Bott-Borel-Weil modules for various purposes in representation theory; for example, our results should afford a constructive proof of Bott's theorem in the spirit of \cite{Timchenko:2014}.

\subsubsection*{Paper organization and terminology}

The multi-faceted nature of \bit{flag manifolds} has led different authors to  different choices of a working definition, with different connotations. For instance \eqref{over_Borel} singles out a complex structure, and all of (\ref{over_Borel}--\ref{over_centralizer}) a base point, foreign to the nested subspaces definition. In this paper we follow \cite{Besse:1987} and work with 
   \begin{equation}
	   \label{setup}
      \text{a coadjoint orbit $X$ of the compact Lie group $G$ ($=U_n$)}
   \end{equation}
   (§2). This is base-point free but singles out a symplectic form $\w$ (of Kirillov-Kostant-Souriau).\footnote{Thus the historical order is reversed, in which one finds first complex structures, then metrics (Fubini-Study \cite{Study:1905a}, Cartan-Ehresmann \cite{Cartan:1929,Ehresmann:1934}), and only finally symplectic forms.} In that setting $X$ turns out to have a preferred complex structure $I$ and attendant Kähler metric $\w(\I\,\cdot,\cdot)$ (§3), and classifying other invariant complex structures $J$ is tantamount to classifying compatible \emph{pseudo}-Kähler metrics $\w(J\,\cdot,\cdot)$ (§4). Embeddings into products of Grassmannians then allow us to give for these the very explicit formulas we are after (§5).
   
   We note that Theorems \ref{I} to \ref{Js} are well-known and indeed easily generalized to any compact connected $G$ in \eqref{setup}: see \cite[Exerc. 4.8 and 6.13]{Bourbaki:1982}. For notational simplicity and uniformity, we resisted the temptation to present them in more generality than Theorems \ref{bijection} to \ref{realization}, which we emphatically do not know how to extend beyond type A.

   As recently pointed out by G.~Nawratil \cite{Nawratil:2017}, the idea of nested subspaces and the name \bit{flag} itself originate with R.~de Saussure (in Esperanto! Fig.~1). They were used only sporadically, mainly by associates of projective geometer H.~de Vries \cite{Wythoff:1911}, \cite[p.\,15]{Waerden:1936}, \cite[p.\,22]{Freudenthal:1949}, \cite[p.\,415]{Freudenthal:1969}, until A.~Borel revived them in his Thesis.
   \begin{figure}[hb]
      \sidecaption
      \hspace{1.2cm}
      \includegraphics[width=112.5pt,height=180pt]{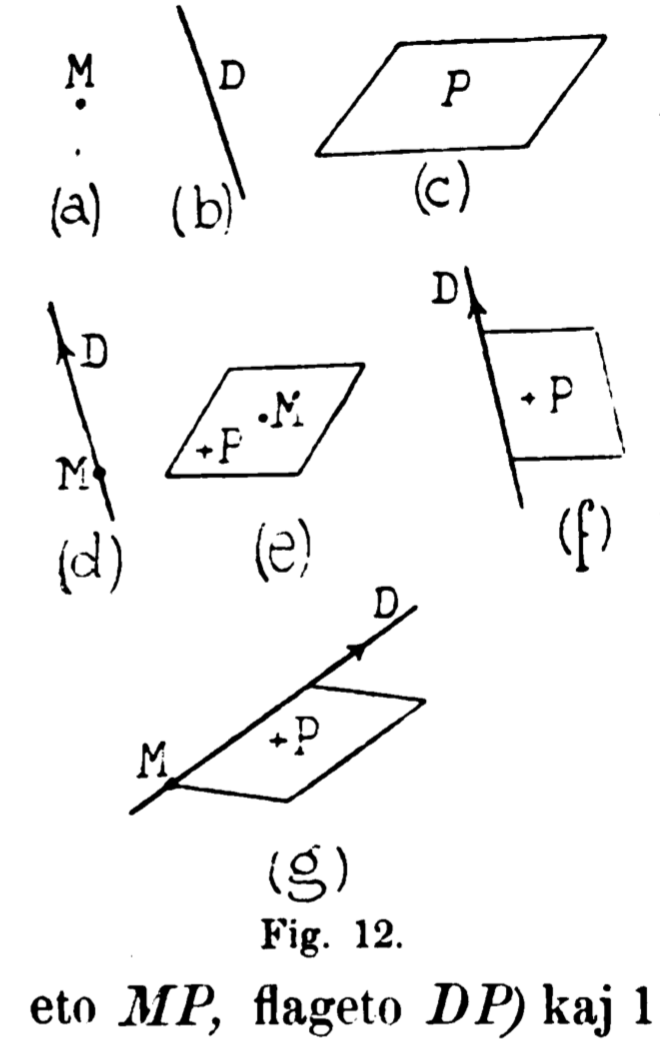}\hspace{.6cm}
      \caption{De Saussure's \bit{flageto} (f) in \cite{Saussure:1908}.\newline}
   \end{figure}

\section{The coadjoint orbits}

\subsection{The unitary group and its complexification}

Throughout this paper $G$ will denote the group $U_n=\{g\in G(\CC):\overline gg=\1\,\}$ of unitary matrices in $G(\CC)=GL_n(\CC)$, and $\overline m$ will always mean the adjoint (a.k.a.~complex conjugate transpose) of any row, column or matrix $m$. The Lie algebra $\LG(\CC)=\LGL_n(\CC)$ splits as the sum $\LG\oplus\i\LG$ of the skew-adjoint and the self-adjoint:
\begin{equation}
   \left\{\begin{array}{rcrcl}
      \LG&=&\LU_n&=&\{Z\in\LG(\CC):\overline Z+Z=0\},\\[1ex]
      \i\LG&=&\i\LU_n&=&\{Z\in\LG(\CC):\overline Z=Z\}
   \end{array}\right.
\end{equation}
where $\i=\sqrt{-1}$.

\subsection{The trace form and duality}

We write $\+\cdot,\cdot\:$ for the symmetric, complex bilinear form $\LG(\CC)\times\LG(\CC)\to\CC$ defined by
\begin{equation}
   \label{trace_form}
   \+A,B\: = -\Tr(AB).
\end{equation}
This form is $G(\CC)$-invariant, i.e.~it satisfies $\+\Ad_gA,\Ad_gB\:=\+A,B\:$ and infinitesimally
\begin{equation}
   \label{trace_form_invariance}
   \+\ad_ZA,B\:+\+A,\ad_ZB\:=0
\end{equation}
where $\Ad_gA = gAg\inv$ and $\ad_ZA = [Z,A]$. These formulas hold for all $g\in G(\CC)$ and $A, B, Z\in\LG(\CC)$, and we have
\begin{proposition}
   \label{restricted_trace_form}
	The restriction $\+\cdot,\cdot\:_{\LG\times\LG}$ is real-valued, real bilinear, $G$-invariant and positive definite. This allows us to \textbf{identify $\LG^*$ with $\i\LG$} \textup(and hence $\LG$ with $\i\LG^*$\textup) so that duality and the coadjoint action read, for $(g,x,Z)\in G\times\LG^*\times\LG$\textup,
	\begin{equation}
	   \label{coadjoint_action}
	   \<x,Z\>:=\+\i x,Z\:,\qquad\quad
	   g(x) = gxg\inv,\qquad\quad
	   Z(x) = [Z,x].
	\end{equation}
\textup(The restriction $\+\cdot,\cdot\:_{\i\LG\times\i\LG}$ has the same properties, except it is negative definite.\textup)\qed
\end{proposition}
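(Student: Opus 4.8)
The plan is to establish the four stated properties of $\+\cdot,\cdot\:_{\LG\times\LG}$ one at a time straight from the definition \eqref{trace_form}, then to extract the identification $\LG^*\cong\i\LG$ from the nondegeneracy this entails, and finally to recover the matrix formulas in \eqref{coadjoint_action} by unwinding the usual definition of the coadjoint action.

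Real bilinearity is immediate because $\+\cdot,\cdot\:$ is already $\CC$-bilinear on all of $\LG(\CC)$, and symmetry and $G$-invariance are inherited from the corresponding properties on $\LG(\CC)$ recalled above (for invariance, restrict $g$ to $G$, or use \eqref{trace_form_invariance}). For real-valuedness and positive definiteness the one thing I would use is that on $\LG=\LU_n$ the defining relation $\overline Z=-Z$ lets us rewrite $\+A,B\:=-\Tr(AB)=\Tr(A\overline B)$, together with the elementary identities $\overline{\Tr M}=\Tr\overline M$ and $\overline{AB}=\overline B\,\overline A$: the former gives $\overline{\Tr(AB)}=\Tr(\overline B\,\overline A)=\Tr(BA)=\Tr(AB)$, so $\+A,B\:$ is real, while $\+A,A\:=\Tr(A\overline A)=\sum_{i,j}|A_{ij}|^2$ is manifestly $\geq0$ and vanishes only for $A=0$. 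The parenthetical claim about $\i\LG$ comes from redoing these computations with $\overline Z=+Z$ instead of $\overline Z=-Z$, which merely reverses the sign of the sum of squares and so turns positive definiteness into negative definiteness while leaving the other three properties untouched.

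Once $\+\cdot,\cdot\:_{\LG\times\LG}$ is known to be positive definite — in particular nondegenerate — on the finite-dimensional space $\LG$, it provides a real-linear isomorphism $\LG\to\LG^*$, $Z\mapsto\+Z,\cdot\:$; precomposing this with the real-linear isomorphism $\i\LG\to\LG$, $x\mapsto\i x$ (legitimate since $\overline x=x$ forces $\overline{\i x}=-\i x$) yields the isomorphism $\i\LG\to\LG^*$ we are after, whose defining property is exactly $\<x,Z\>=\+\i x,Z\:$. The coadjoint action is then just the dual of $\Ad$ transported through this identification: from $\<g(x),Z\>=\<x,\Ad_{g\inv}Z\>=\+\i x,\,g\inv Zg\:=-\Tr(\i x\,g\inv Zg)=-\Tr(g\,\i x\,g\inv Z)$ (cyclicity of the trace) $=\+\i(gxg\inv),Z\:$ one reads off $g(x)=gxg\inv$; differentiating at $g=\exp(tZ)$, $t=0$ — or applying \eqref{trace_form_invariance} directly — then gives $Z(x)=[Z,x]$.

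I do not expect any real obstacle: everything collapses to the handful of one-line computations above plus elementary linear algebra. The only point that rewards some care is the bookkeeping of the factor $\i$ — remembering that it is the \emph{self-adjoint} matrices $\i\LG$, not $\LG$ itself, that get identified with $\LG^*$, so the duality pairing carries that extra $\i$ — and the attendant check that, with this normalization, the abstract coadjoint action really does collapse to honest conjugation $x\mapsto gxg\inv$ with no stray sign or factor surviving.
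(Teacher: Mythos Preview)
Your proposal is correct. The paper itself gives no proof at all --- the proposition is marked with an immediate \qed{} as a routine fact --- so your argument simply supplies the standard one-line verifications (real-valuedness via $\overline Z=-Z$, positive definiteness via $\Tr(A\overline A)=\sum|A_{ij}|^2$, and the transport of the coadjoint action through the pairing) that the authors regard as not worth writing out.
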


\subsection{The orbits}

A \bit{coadjoint orbit} is an orbit $X$ of the action \eqref{coadjoint_action} of $G$ on $\LG^*=\i\LG$, or in other words, a conjugacy class of self-adjoint matrices. Since such matrices have real eigenvalues and an orthonormal basis of eigenvectors, we have

\begin{proposition}
	Each orbit meets, exactly once, the \textbf{dominant Weyl chamber}
	\begin{equation}
	   \label{dominant_chamber}
	   D
	   =\left\{\l=
	   \begin{pmatrix}
	      \smash{\underline{\l_{s_1}}}\vphantom{\l_{s_1}}&\\
	      &\ddots\\
	      &&\smash{\underline{\l_{s_k}}}\vphantom{\l_{s_k}}
	   \end{pmatrix}\in\LG^*:
	   \l_{s_1}>\l_{s_2}>\dots>\l_{s_k}\right\}
	\end{equation}
	consisting of nonincreasing real diagonal matrices.\qed
\end{proposition}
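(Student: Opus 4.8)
The plan is to separate the two assertions---that each orbit \emph{meets} $D$, and that it meets $D$ \emph{only once}---and to reduce both to the spectral theorem for self-adjoint matrices already invoked in the sentence preceding the statement.

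For existence, start from an arbitrary point $x\in\LG^*=\i\LG$, i.e.~a self-adjoint matrix. By the spectral theorem $x$ has real eigenvalues $\mu_1,\dots,\mu_n$ and an orthonormal eigenbasis; assembling that eigenbasis into the columns of a unitary matrix $g\in G=U_n$ makes $gxg\inv$ the diagonal matrix with entries $\mu_1,\dots,\mu_n$. Now pick a permutation $\pi$ of $\{1,\dots,n\}$ that sorts these entries into nonincreasing order, and let $P_\pi$ be the associated permutation matrix; since $P_\pi$ is unitary, $P_\pi g\in G$ and $(P_\pi g)\,x\,(P_\pi g)\inv$ is the diagonal matrix with entries $\mu_{\pi(1)}\geq\dots\geq\mu_{\pi(n)}$, which is exactly a point of $D$ (the underlined blocks of $D$ merely record the runs of equal consecutive entries). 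Hence the orbit of $x$ meets $D$.

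For uniqueness, note that the multiset of eigenvalues of a matrix---equivalently, its characteristic polynomial---is invariant under conjugation $x\mapsto gxg\inv$, hence constant along each coadjoint orbit. But a nonincreasing real diagonal matrix is completely determined by the multiset of its diagonal entries, since a given multiset admits only one nonincreasing arrangement. So any two elements of $D$ lying on the same orbit share the same diagonal and therefore coincide; this is the ``exactly once''.

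I do not expect a genuine obstacle: the statement packages classical linear algebra. The two points worth stating carefully are that the sorting step is realized \emph{inside} $G$---which holds because permutation matrices are unitary---and that the unordered spectrum is a \emph{complete} conjugacy invariant for self-adjoint matrices, which is once more the spectral theorem. What will actually be used downstream is the resulting normal form $\l\in D$ together with the block sizes $n_1,\dots,n_k$ it exhibits.
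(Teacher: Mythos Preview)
Your proof is correct and follows exactly the approach the paper intends: the proposition is stated with a terminal \qed\ and no further argument, relying entirely on the preceding sentence (``such matrices have real eigenvalues and an orthonormal basis of eigenvectors''), which is precisely the spectral theorem you unpack. Your version simply makes explicit the two ingredients the paper leaves implicit---that the sorting permutation is realized by a unitary, and that the nonincreasing arrangement of the eigenvalue multiset is unique.
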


\noindent
Here $\underline{\l_{s_i}}$ denotes the scalar matrix $\l_{s_i}\1$ of a certain size $|s_i|$, i.e.~we are lumping equal eigenvalues together: while the map $i\mapsto\l_i$ is nominally $\{1,\dots,n\}\to\RR$, it is constant on the members of a partition
\begin{equation}
   \label{partition}
	\mathcal S=\{s_1,\dots,s_k\}
\end{equation}
of $\{1,\dots,n\}$ into consecutive \bit{segments} whose cardinalities are the $|s_i|$; hence it induces a map $\mathcal S\to\RR$ which we write again $s\mapsto\l_s$.

\begin{exam}
   For $\l$ as in \eqref{lambda} below, $\mathcal S=\{\{1\},\{2,3\},\{4\}\}$.
\end{exam}

\subsection{The stabilizer and its center}

\begin{proposition}
	Under the coadjoint action \eqref{coadjoint_action}, the stabilizer $G_\l$ of $\l$ in \eqref{dominant_chamber} equals
	\begin{equation}
	   \label{stabilizer}
	   H=\begin{pmatrix}
	      U_{|s_1|}&\\
	      &\ddots\\
	      &&U_{|s_k|}
	   \end{pmatrix}\cong U_{|s_1|}\times\dots\times U_{|s_k|}.\\
	\end{equation}
	This subgroup is also the centralizer of its center $S\cong \underline{U_{1}}\times\dots\times\underline{U_{1}}$ \textup($k$ factors\textup). When we move to another point $x=g(\l)$ in the coadjoint orbit $X=G(\l)$, the stabilizer and its center become $G_x=gH g\inv$ and $gSg\inv$.\qed
\end{proposition}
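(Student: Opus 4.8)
My plan is to reduce the whole statement to the elementary fact that a matrix commutes with a diagonal matrix exactly when it respects the latter's eigenspace decomposition. First I would unwind \eqref{coadjoint_action}: $g\in G_\l$ means $g\l g\inv=\l$, i.e. $g$ commutes with $\l$. Writing $\CC^n=\bigoplus_i E_i$ for the decomposition into $\l$-eigenspaces, where $E_i$ is the span of the standard basis vectors indexed by the segment $s_i$ and carries eigenvalue $\l_{s_i}$, the strict inequalities $\l_{s_1}>\dots>\l_{s_k}$ in \eqref{dominant_chamber} ensure the $E_i$ are the \emph{full} eigenspaces; hence a linear map commutes with $\l$ iff it preserves each $E_i$, i.e. iff its matrix is block diagonal with blocks of sizes $|s_1|,\dots,|s_k|$. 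Intersecting with $G=U_n$ yields $G_\l=H$ as in \eqref{stabilizer}, and the block decomposition is exactly the isomorphism $H\cong U_{|s_1|}\times\dots\times U_{|s_k|}$.

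Next I would identify the center of $H$ and its centralizer. Since the center of $U_m$ consists of the scalar matrices $e^{\i\theta}\1$, the center of the product $H$ is the group $S$ of block-scalar unitary matrices, i.e. $\underline{U_1}\times\dots\times\underline{U_1}$ ($k$ factors), which is the stated description of $S$. For the centralizer, $C_G(S)\supseteq H$ is automatic because $S$ is central in $H$; conversely I would pick a single $\sigma\in S$ whose $k$ block-values $e^{\i\theta_1},\dots,e^{\i\theta_k}$ are pairwise distinct, so that $\sigma$ is a diagonal matrix with exactly $k$ distinct eigenvalues and eigenspaces $E_1,\dots,E_k$; then any $g\in C_G(S)$ commutes with $\sigma$, hence is block diagonal by the first paragraph, giving $C_G(S)\subseteq H$ and therefore equality.

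Finally, the assertions about $x=g(\l)$ are pure transport of structure: for any group action, $h\in G_{g(\l)}\iff (g\inv h g)(\l)=\l\iff h\in gG_\l g\inv$, so $G_x=gHg\inv$; and since conjugation by $g$ is an automorphism of $G$, it sends the center of $H$ to the center of $gHg\inv$, i.e. $S$ to $gSg\inv$. I do not expect a genuine obstacle here — everything is elementary linear algebra — the one point deserving care is the repeated appeal to distinctness of eigenvalues (of $\l$ in the first step, of the chosen $\sigma$ in the second), since that is precisely what pins the centralizer down to the block-diagonal $H$ rather than anything larger.
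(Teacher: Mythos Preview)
Your argument is correct. The paper itself does not supply a proof --- the terminal \qed\ after the statement signals that the authors regard it as immediate --- so there is no ``paper's approach'' to compare against; your write-up is exactly the kind of routine linear-algebra verification (commutation with a diagonal matrix forces block-diagonal form, center of a product is the product of centers, transport of structure under conjugation) that the authors are tacitly invoking.
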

	
\noindent
We note that $S\subset T\subset H$, where $T$ is the maximal torus of all diagonal matrices in $G$, and equality holds when all $|s_i|=1$ (nondegenerate eigenvalues). Again the trace form \eqref{trace_form} allows us to identify $(\LS^*, \LT^*, \LH^*)$ with $(\i\LS,\i\LT,\i\LH)$; under this identification, the projections
\begin{equation}
   \label{projections}
   \begin{tikzcd}
      \LH^* \ar[r] &\LT^* \ar[r,"\avg"] &\LS^*
   \end{tikzcd}
\end{equation}
consist in taking the diagonal part, resp.~the block average
\begin{equation}
   \avg
   \begin{pmatrix}
      \mu_{s_1}\,&\\
      &\ddots\\
      &&\mu_{s_k}
   \end{pmatrix}
   =
   \begin{pmatrix}
      \smash{\underline{\Tr(\mu_{s_1})/|s_1|}}
      \vphantom{\Tr(\mu_{s_1})/|s_1|}&\\
      &\ddots\\
      &&\smash{\underline{\Tr(\mu_{s_k})/|s_k|}}
      \vphantom{\Tr(\mu_{s_k})/|s_k|}
   \end{pmatrix}.
\end{equation}

\subsection{The tangent space $T_xX$ and its complexification}

Under the identifications of Proposition \ref{restricted_trace_form}, the last formula in \eqref{coadjoint_action} says that the tangent space $T_xX=\LG(x)$ to $X$ at $x$ is the image of the map
\begin{equation}
   \label{ad_ix}
   \ad_{\i x}=[\i x,\cdot]:\i\LG\to\i\LG.
\end{equation}
As this image sits in the real part of $\LG(\CC)=\LG^*\oplus\i\LG^*$, we can complexify it ``in place'' as
\begin{equation}
   \label{T_xXC}
	T_xX\oplus\i T_xX=[\i x,\i\LG\oplus\LG]\subset\LG^*\oplus\i\LG^*.
\end{equation}
And as \eqref{ad_ix} is skew-adjoint (see \eqref{trace_form_invariance}), its image is the orthogonal of its kernel $\i\LG_x$ relative to $\+\cdot,\cdot\:_{\i\LG\times\i\LG}$, i.e.~we have

\begin{proposition}
	\begin{equation}
	   \label{perps}
	   T_xX = \i\LG_x^\perp\qquad\text{and in particular}\qquad T_\l X = \i\LH^\perp.\rlap{\quad\qed}
	\end{equation}
\end{proposition}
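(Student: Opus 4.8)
The plan is to read off $T_xX$ as the image of the skew-adjoint endomorphism $\ad_{\i x}$ of $\i\LG$ and then invoke the elementary linear-algebra fact that, on a finite-dimensional vector space carrying a nondegenerate symmetric bilinear form, the image of a skew-adjoint endomorphism is the orthogonal complement of its kernel.

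Concretely I would proceed as follows. First, by the third formula in \eqref{coadjoint_action} the infinitesimal action of $Z\in\LG$ at $x$ is $Z(x)=[Z,x]$, so $T_xX=\LG(x)$ is exactly the image of the map $\ad_{\i x}:\i\LG\to\i\LG$ of \eqref{ad_ix}; one checks in passing that this map does land in $\i\LG$, since $x$ being self-adjoint makes $\i x$ skew-adjoint, and the commutator of a skew-adjoint matrix with a self-adjoint one is again self-adjoint. Second, \eqref{trace_form_invariance} with $Z=\i x$ reads $\+\ad_{\i x}A,B\:+\+A,\ad_{\i x}B\:=0$, i.e.\ $\ad_{\i x}$ is skew-adjoint for the form $\+\cdot,\cdot\:_{\i\LG\times\i\LG}$. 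Third, its kernel is $\{A\in\i\LG:[A,x]=0\}$, which is precisely $\i\LG_x$: the stabilizer subalgebra $\LG_x=\{Z\in\LG:[Z,x]=0\}$ is obtained by differentiating $gxg\inv=x$, and the ambient decomposition $\LG(\CC)=\LG\oplus\i\LG$ shows that nothing outside $\i\LG_x$ is added.

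Putting these together, $\mathrm{image}(\ad_{\i x})^{\perp}=\ker(\ad_{\i x}^{\,*})=\ker(-\ad_{\i x})=\i\LG_x$, orthogonality being taken relative to $\+\cdot,\cdot\:_{\i\LG\times\i\LG}$; since that form is negative definite by Proposition \ref{restricted_trace_form}, hence nondegenerate, and $\i\LG$ is finite-dimensional, applying $\perp$ once more yields $T_xX=\mathrm{image}(\ad_{\i x})=\i\LG_x^{\perp}$. The ``in particular'' is the case $x=\l$, where $G_\l=H$ by \eqref{stabilizer}, so $\i\LG_\l^{\perp}=\i\LH^{\perp}$.

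I do not anticipate a real obstacle: the statement is essentially a corollary of the paragraph preceding it. The only points that want a moment's care are (i) that the pairing in play is negative definite rather than positive definite — harmless, as only nondegeneracy is used in the ``image $=$ kernel$^{\perp}$'' argument — and (ii) that $\ker\ad_{\i x}$ is exactly $\i\LG_x$ and not some larger subspace, which is immediate once one remembers that $\LG_x$ is the stabilizer subalgebra.
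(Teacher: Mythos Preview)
Your proposal is correct and follows exactly the paper's approach: the proposition is stated with a \qed and its content is the paragraph immediately preceding it, namely that $T_xX$ is the image of the skew-adjoint map $\ad_{\i x}$, hence the orthogonal of its kernel $\i\LG_x$. You have simply unpacked that sentence with a bit more care about nondegeneracy and the identification of the kernel.
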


\begin{remark}
   When $G$ is $U_2$ (or $SU_2$, or $SO_3$), coadjoint orbits are just 2-spheres. Then \eqref{perps} is the statement that the tangent space at a point is the orthogonal to the axis of rotations around that point (Fig.~2). Counterclockwise rotation by $90^\circ$ provides one of the complex structures we are about to describe.
   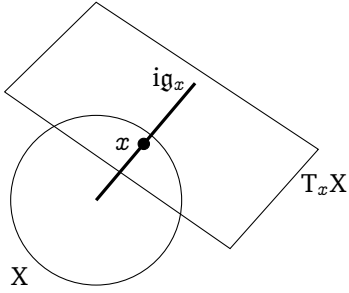
\begin{figure}[hb] 
      \sidecaption
      \hspace{2cm}%
      \begin{tikzpicture}[scale=0.75,inner sep=0pt, outer sep=0pt]
         \draw (1.5cm,0) arc (0:360:1.5cm);
         \draw (130:2.5) -- (-20:2.5) -- (13:4) -- (90:3.5) -- (130:2.5);
         \draw[very thick,outer sep=3pt] (50:2.7) to (0,0);
         \draw[fill] (50:1.3) circle (1mm);
         \node at (65:1.1) {$x$};
         \node at (59:2.5) {$\i\LG_x$};
         \node at (4:4) {$T_xX$};
         \node at (225:1.9) {$X$};
      \end{tikzpicture}
      \caption{The orthogonality \eqref{perps}.}
   \end{figure}
\end{remark}

\section{The canonical complex structure}

Let $X=G(\l)$ be the coadjoint orbit with dominant element $\l$, as in \eqref{dominant_chamber}. The restriction of $\ad_{\i x}$ \eqref{ad_ix} to its tangent space \eqref{perps} has kernel $\i\LG_x^{\vphantom\perp}\cap\i\LG_x^\perp=\{0\}$, hence is a (still skew-adjoint) linear bijection we shall denote
\begin{equation}
   \label{A_x}
   A_x:T_xX\to T_xX.
\end{equation}
Recall that $T_xX$ carries the \textbf{\textit{Kirillov-Kostant-Souriau (KKS) $2$-form}} $\w$, defined by $\w(Z(x),Z'(x))=\<Z(x),Z'\>=\<x,[Z',Z]\>$.

\begin{theorem}
	\label{I}
   The KKS $2$-form of $X$ is given by
   \begin{equation}
      \label{2-form}
      \w(\d x,\d'x)=\+\d x,A_x\inv\d'x\:.
   \end{equation}
Moreover the formulas
\begin{equation}
   \label{I_and_g}
   \I_x = |A_x|\inv A_x,
   \qquad\qquad
	\g(\d x,\d'x) =-\+\d x,|A_x|\inv\d'x\:,
\end{equation}
where $|A_x^{\phantom2}|=\sqrt{\smash[b]{-A_x^2}\vphantom{q}}\,,$ make $\w$ part of a $G$-invariant \textbf{Kähler structure} $(\I, \g, \w)$\textup:
\begin{enumerate}[\quad\upshape(a)]
	\item $\I$ is an (integrable) complex structure,
	\item $\g$ is a positive definite metric,
	\item we have $\w(\cdot,\cdot)=\g(\cdot,\I\,\cdot)\ $ and $\ \g(\cdot,\cdot)=\w(\I\,\cdot,\cdot)$.
\end{enumerate}
\vspace{0ex}
\end{theorem}

\begin{proof}
   Fix $\d x, \d'x\in T_xX$ and put $\i Z=A_x\inv\d'x\in\i\LG$. Then (\ref{A_x}, \ref{ad_ix}, \ref{coadjoint_action}) give
   \begin{equation}
      \d'x=A_x^{\phantom{1}}A_x\inv\d'x=[\i x,\i Z]=Z(x),
   \end{equation}
   whence the definition of the KKS 2-form and \eqref{coadjoint_action} give us \eqref{2-form}:
   \begin{equation}
      \w(\d x,\d'x)=\<\d x,Z\>=\+\d x,\i Z\:=\+\d x,A_x\inv\d'x\:.
   \end{equation}
   Next we note that $|A_x|$ and $\I_x$ are the (commuting) positive definite and unitary part of the \emph{polar decomposition} of $A_x$. So they depend smoothly on $A_x$ \cite[{}6.70]{Souriau:1970} and $\I_x$, being again skew-adjoint, is an \bit{almost complex structure}: $\I_x^2=-\I_x^*\I_x^{\phantom*}=-\1$. Now (c) is clear by plugging $A_x=|A_x|\I_x$ into (\ref{2-form}, \ref{I_and_g}), and so is (b) since $\+\cdot,\cdot\:_{\i\LG\times\i\LG}$ is negative definite. There remains to see~(a). For a $G$-invariant $\I$, such as ours is by construction, this is equivalent to either of
   \begin{compactenum}[(1)][20]
      \refstepcounter{equation}
      \item[(\theequation)] \label{frobenius} sections of the bundle of +i-eigenspaces of $\I$ in $TX\oplus\i TX$ are \emph{closed under Lie bracket} (Frobenius-Newlander-Nirenberg \cite{Newlander:1957}); 
      \refstepcounter{equation}
      \item[(\theequation)] \label{frolicher} at $x=\l$, the preimage of the +i-eigenspace of $\I_\l$ under the infinitesimal action $\LG(\CC)\to T_\l X\oplus\i T_\l X$ \eqref{T_xXC} is a \emph{Lie subalgebra} (Frölicher \cite[§20]{Frolicher:1955}).
   \end{compactenum}
   We prove \eqref{frolicher}. First observe that if $u$ and $v$ are eigenvectors of $x\in X$ for eigenvalues $\l_r$ and $\l_s$, then the matrix $u\overline v$ is an eigenvector of $\ad_x$ for eigenvalue $\l_r-\l_s$:
   \begin{equation}
      \label{diagonalization}
      [x,u\overline v\,]=xu\overline v-u\overline{xv}=(\l_r-\l_s)u\overline v.
   \end{equation}
   It follows that $\ad_{\i x}$ is diagonalizable with spectrum $\Delta=\left\{\i(\l_r-\l_s):r,s\in\mathcal S\right\}$, and so is $A_x$ with spectrum $\Delta\smallsetminus\{0\}$. And indeed $A_\l$ explicitly \emph{is} ``diagonal'' with eigenvectors the elementary matrices $E_{ij}=e_i\overline{e}_j$: in more detail, writing tangent vectors $V\in T_\l X=\i\LH^\perp$ \eqref{perps} as self-adjoint matrices with blocks $V_{r|s}$ in the shape \eqref{stabilizer}, formula \eqref{diagonalization} gives
\begin{equation}
   \label{A_lambda}
   A_\l
   \left(
   \begin{array}{c|c|c}
      &\bloc pq&\bloc pr\\[5pt]\hline
      &&\\[-10pt]
      {\bloc qp}&&\bloc qr\\[5pt]\hline
      &&\\[-10pt]
      {\bloc rp}&{\bloc rq}&
   \end{array}
   \right)
   =
   \i
   \left(
   \begin{array}{c|c|c}
      &(\factor pq)\bloc pq&(\factor pr)\bloc pr\\[5pt]\hline
      &&\\[-10pt]
      (\factor qp)\bloc qp&&(\factor qr)\bloc qr\\[5pt]\hline
      &&\\[-10pt]
      (\factor rp)\bloc rp&(\factor rq)\bloc rq&
   \end{array}
   \right)
\end{equation}
(the general pattern should be clear, though we only write out the case where the partition \eqref{partition} is into 3 segments $p,q,r$). Hence we obtain by definition of $|A_\l|$ and $\I_\l$ that the latter is the same as \eqref{A_lambda} with each $\i(\l_a-\l_b)$ divided by its modulus, i.e.
\begin{equation}
   \label{I_lambda}
   \I_\l
   \left(
   \begin{array}{c|c|c}
      &\bloc pq&\bloc pr\\[5pt]\hline
      &&\\[-10pt]
      \bloc qp&&\bloc qr\\[5pt]\hline
      &&\\[-10pt]
      \bloc rp&{\bloc rq}&
   \end{array}
   \right)
   =
   \left(
   \begin{array}{c|c|c}
      &\i\bloc pq&\i\bloc pr\\[5pt]\hline
      &&\\[-10pt]
      -\i\bloc qp&&\i\bloc qr\\[5pt]\hline
      &&\\[-10pt]
      -\i\bloc rp&-\i\bloc rq&
   \end{array}
   \right).
\end{equation}
Thus we see that the +i-eigenvectors of $\I_\l$, and likewise their preimages under \eqref{ad_ix} or $\ad_x$, are the block upper triangular matrices --- hence a Lie subalgebra~in~$\LG(\CC)$.\qed
\end{proof}

\begin{remark}
   We could have shortened the proof by using the fact that, given (b) and (c), (a) is equivalent to $\mathrm d\w=0$. But this is a ``delicate'' fact \cite[{}2.29]{Besse:1987}, whereas \eqref{I_lambda} is both easy and useful for the sequel. We note also that Theorem \ref{realization} will independently reprove (a) from knowing it on Grassmannians \eqref{grassmannian}.
\end{remark}

\begin{remark}
   Using the diagonalizability \eqref{diagonalization} and Lagrange interpolation \cite[§6.7]{Hoffman:1971} one can give an explicit formula for $\I_x$ at any point, viz.
   \begin{equation}
      \label{interpol}
      \I_x=\sum_{\d\in\Delta\smallsetminus\{0\}}\i\sign(\d)\mathcal E_\d
      \qquad\text{with}\qquad
      \mathcal E_\d=\prod_{\varepsilon\in\Delta\smallsetminus\{0,\d\}} \frac{(\ad_{\i x} -\  \varepsilon)}{(\d - \varepsilon)},
   \end{equation}
   which confirms e.g.~the $G$-invariance and smoothness (indeed algebraicity) of $\I$. Unfortunately this formula seems rather less enlightening than \eqref{I_and_g}.
\end{remark}

\begin{remark}
   The idea of using the polar decomposition to produce (``tamed'') almost complex structures occurs in a general context in \cite[p.\,8]{Weinstein:1977a}; its application to obtain \emph{this one} seems new. Other, less direct descriptions of $I$ are found in \cite[§2]{Serre:1954}, \cite[§4]{Borel:1954}, \cite[{}14.6]{Borel:1958}, \cite[p.\,522]{Guillemin:1982}, \cite[{}8.34]{Besse:1987}, \cite[{}5.8]{Vogan:1987}.
\end{remark}

\subsection{The case of Grassmannians}

Let $\Gr_m$ be the Grassmannian of complex $m$-planes in $\CC^n$, each identified with the self-adjoint projector $x$ upon it, i.e.
\begin{equation}
   \label{grassmannian}
   \Gr_m=\left\{x\in\LG^*: x^2 = x, \Tr(x)=m\right\} 
   = G\underbrace{\left(\begin{array}{ll}\1_m&0\\0&0_{n-m}\end{array}\!\!\right)}_{\varpi_m}.
\end{equation}
Its dominant element $\varpi_m$ is the highest weight of the fundamental $G$-module $\wedge^m\CC^n$.

\begin{proposition}
	In this case we have $|A_x|=\1$ so that the canonical structure of $\Gr_m$ is simply
	\begin{subnumcases}{\label{-}}
	   \hspace{10ex}\llap{$\I\d x$}&\hspace{-2ex}$=[\i x,\d x]$
	   \label{grassmann_I}\\
	   \hspace{10ex}\llap{$\g(\d x,\d'x)$}&\hspace{-2ex}$=\Tr(\d x\d'x)$\\
	   \hspace{10ex}\llap{$\w(\d x,\d'x)$}&\hspace{-2ex}$=\Tr(\d x\I\d'x)$.
	\end{subnumcases}
\end{proposition}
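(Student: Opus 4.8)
The plan is to read all three formulas off the Theorem as soon as the spectrum of $A_x$ is in hand. First I would note that the dominant element of $\Gr_m$ is the matrix $\varpi_m$ of \eqref{grassmannian}, whose partition \eqref{partition} has $k=2$ segments and whose eigenvalues are $\l_{s_1}=1$ and $\l_{s_2}=0$. Since every $x\in\Gr_m$ is conjugate to $\varpi_m$, it carries the same two eigenvalues $1,0$; hence by \eqref{diagonalization} the operator $\ad_{\i x}$ is diagonalizable with spectrum $\Delta=\{\i(\l_r-\l_s)\}=\{0,\i,-\i\}$, and its restriction $A_x$ to $T_xX=\i\LG_x^\perp$ \eqref{perps} has spectrum $\Delta\smallsetminus\{0\}=\{\i,-\i\}$.

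A skew-adjoint operator whose spectrum is $\{\i,-\i\}$ satisfies $A_x^2=-\1$ on $T_xX$, so that $|A_x|=\sqrt{-A_x^2}=\1$; this is the first assertion. Substituting $|A_x|=\1$ into \eqref{I_and_g} then gives at once $\I_x=|A_x|\inv A_x=A_x=\ad_{\i x}$, i.e.\ $\I\d x=[\i x,\d x]$ as in \eqref{grassmann_I}, and $\g(\d x,\d'x)=-\+\d x,\d'x\:=\Tr(\d x\d'x)$ by the sign convention \eqref{trace_form}. The remaining formula $\w(\d x,\d'x)=\Tr(\d x\,\I\d'x)$ is then immediate from part (c) of the Theorem, $\w(\cdot,\cdot)=\g(\cdot,\I\,\cdot)$, together with the displayed value of $\g$.

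Since the computation is purely mechanical there is no genuine obstacle; the one spot deserving a second glance is the cancellation of the two minus signs, the one in $\+\cdot,\cdot\:=-\Tr(\cdot\,\cdot)$ and the one in the definition of $\g$ in \eqref{I_and_g}, which conspire to leave $\g(\d x,\d'x)=+\Tr(\d x\d'x)$. I would also remark that $A_x^2=-\1$ admits a bare-hands proof bypassing \eqref{diagonalization}: differentiating $x^2=x$ gives $x\,\d x+\d x\,x=\d x$ for $\d x\in T_x\Gr_m$, whence $x\,\d x\,x=0$ upon left-multiplying by $x$, and therefore $\ad_{\i x}^2\d x=-\ad_x^2\d x=-(x\,\d x+\d x\,x)=-\d x$, giving an alternative, coordinate-free route to $|A_x|=\1$.
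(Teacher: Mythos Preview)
Your proposal is correct. Your final paragraph's ``bare-hands'' computation---differentiate $x^2=x$, deduce $x\,\d x\,x=0$, and expand $[x,[x,\d x]]$---is exactly the paper's own proof; your primary route via the spectrum of $A_x$ is a mild variant that simply quotes the eigenvalue analysis already done in the proof of the preceding Theorem rather than redoing the commutator expansion.
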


\begin{proof}
	Deriving and reusing the relations $x=x^2=x^3$ gives $\d x = \d x.x + x.\d x = \d x.x + x.\d x.x + x.\d x$. This implies $x.\d x.x = 0$ and $-A_x^2\d x=[x,[x,\d x]] = x.\d x - 2x.\d x.x + \d x.x = \d x$. So $-A_x^2$ and hence its square root are the identity.\qed
\end{proof}

\begin{remark}
   \label{reduction}
   The Hermitian metric $\g+\i\w$ in \eqref{-} can be seen as Kähler reduction of the flat metric $(v,v'):= 2\Tr(\overline vv')$ on $\CC^{n\times m}\cong\Hom(\CC^m,\CC^n)$.\footnote{Alternatively on the dual $\CC^{m\times n}$, if we insist on obtaining \eqref{grassmann_I} and not its opposite $\frac1\i[x,\d x]$. It would be interesting to know if the $Y$ of Theorem \ref{realization} can be similarly obtained by (pseudo-)Kähler reduction.}  Indeed $U_m$ acts there by $a(v)=va\inv$, preserving $\Omega = \Im(\cdot,\cdot)$ with moment map $\psi(v)=-\overline vv$, and \eqref{-} obtains on passing to the quotient $\Gr_m=\psi\inv(\hspace{-1pt}-\1\hspace{1pt})/U_m$ \cite[§V.5]{Greub:1973}, \cite[p.\,240]{Thomas:2006a}. E.g.~for $m=1$ one recovers the \bit{Fubini-Study metric} on projective space, i.e. \cite[§5]{Study:1905a}
   \begin{equation}
   	2\left[\frac{(\d v,\d'v)}{\|v\|^2} - \frac{(\d v,v)(v,\d'v)}{\|v\|^4}\right]
   	\quad\ \text{on}\quad\ 
   	\Gr_1 = \left\{x=\frac{v(v,\cdot)}{\|v\|^2} : v\in\CC^n\smallsetminus\{0\}\right\}.
   \end{equation}
   Formulas \eqref{-} are emblematic of the explicitness we'd like to have in general.
\end{remark}

\section{The invariant complex structures classified: $k!$ parabolic subalgebras}

In this section we review the classification of complex structures which results from the principle: a $G$-invariant structure $J$ on $X=G(\l)=G/H$ amounts to an $H$-invariant $J_\l\in\End(T_\l X)$, squaring to $-\1$. The results are well-known except perhaps Theorem \ref{bijection}.

\subsection{The decomposition of the isotropy representation}
  
Let $\LG_{r|s}(\CC)$ denote, for segments $r\ne s$ in the partition $\mathcal S$ \eqref{partition}, the matrices \eqref{A_lambda} whose blocks all vanish except perhaps $V_{r|s}$, i.e.
\begin{equation}
   \label{g_rs}
   \LG_{r|s}(\CC)=\left\{Z\in\LG(\CC):Z_{ij}=0 \text{ for } (i,j)\notin r\times s\right\},
\end{equation}
and $\LX_{r|s}$ (resp.~$\i\LX_{r|s}$) the intersection of $\LG$ (resp.~$\i\LG$) with $\LG_{r|s}(\CC)\oplus\LG_{s|r}(\CC)$.

\begin{theorem}
   \label{almost_Js}
   The isotypic decomposition of the isotropy representation of $H=G_\l$ in the complexified tangent space \eqref{T_xXC} at $\l$ into inequivalent irreducibles is
   \begin{equation}
      \label{decomposition}
      T_\l X\oplus\i T_\l X = \bigoplus_{r\ne s\textup{ in }\mathcal S}\LG_{r|s}(\CC).
   \end{equation}
   Consequently,
   \begin{compactenum}[\upshape(a)]
   	\item Every $G$-invariant almost complex structure $J$ on $X=G(\l)$ is obtained by flipping the sign of $\I$ \textup(and hence $\g$\textup) on some summands in $T_\l X=\bigoplus_{r<s} \i\LX_{r|s}$.
      \item As $\g$ coincides with $\tfrac{-1}{|\l_r-\l_s|} \+\cdot\,,\cdot\:$ on $\i\LX_{r|s}$\textup, each such flip affects its signature by turning a block of $|r||s|$ pluses into minuses.
   	\item If $\mathcal S$ has $k$ segments, then $X$ admits $2^{k(k-1)/2}$ different $G$-invariant almost complex structures.
   \end{compactenum}
\end{theorem}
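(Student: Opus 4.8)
The plan is to establish the isotypic decomposition \eqref{decomposition} first and then read off (a), (b), (c) as bookkeeping consequences.

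\emph{Step 1: the decomposition.} I would compute the complexified tangent space $T_\l X\oplus\i T_\l X$ directly from \eqref{T_xXC}: it is $[\i\l,\LG(\CC)]$, equivalently the orthogonal complement of $\LH(\CC)=\bigoplus_s\LG_{s|s}(\CC)$ with respect to the trace form. Since $\LG(\CC)=\bigoplus_{r,s}\LG_{r|s}(\CC)$ (block decomposition, where the diagonal blocks $r=s$ assemble to $\LH(\CC)$), this immediately gives the right-hand side of \eqref{decomposition}. Then I would check $H$-invariance and irreducibility of each summand: $H\cong U_{|s_1|}\times\cdots\times U_{|s_k|}$ acts on $\LG_{r|s}(\CC)\cong\Hom(\CC^{|s|},\CC^{|r|})$ by $h\cdot Z = h_r Z h_s\inv$, which is the external tensor product of the standard representation of $U_{|r|}$ with the conjugate-standard of $U_{|s|}$ (and trivial on the other factors). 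These are irreducible, and they are pairwise inequivalent because distinct pairs $(r,s)$ either involve different factors or swap the roles of standard and conjugate-standard; this inequivalence is what makes the isotypic decomposition unique, so that \emph{every} $H$-invariant endomorphism is block-diagonal in \eqref{decomposition} — this is the only substantive point, and it follows from Schur's lemma applied to $\End_H$.

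\emph{Step 2: consequences (a)--(c).} Given Step 1 and Schur, any $H$-invariant $J_\l$ acts as a scalar on each complex summand $\LG_{r|s}(\CC)$; to square to $-\1$ on the real tangent space it must agree with $\pm\I_\l$ on each real summand $\i\LX_{r|s}$. Since $\i\LX_{r|s}=\i\LX_{s|r}$ as a \emph{real} subspace (it is cut out symmetrically by the self-adjointness condition), the independent choices are indexed by unordered pairs $\{r,s\}$, equivalently by $r<s$; hence $T_\l X=\bigoplus_{r<s}\i\LX_{r|s}$ and flipping the sign of $\I$ on an arbitrary subset of these summands yields a $G$-invariant almost complex structure, proving (a). For (b), \eqref{A_lambda} shows $A_\l$ acts on $\i\LX_{r|s}$ as $\i(\l_r-\l_s)$ (suitably signed on the two blocks), so $|A_\l|$ is the scalar $|\l_r-\l_s|$ there; plugging into the second formula of \eqref{I_and_g} gives $\g=\tfrac{-1}{|\l_r-\l_s|}\+\cdot,\cdot\:$ on that summand, and since $\+\cdot,\cdot\:$ is negative definite on $\i\LG$, $\g$ is positive there; a sign flip on $\g$ therefore turns the $\dim_\RR\i\LX_{r|s}=|r||s|$ positive directions into negative ones, which is (b). Finally (c): the number of unordered pairs from a $k$-element set is $\binom{k}{2}=k(k-1)/2$, and each contributes an independent binary choice, giving $2^{k(k-1)/2}$ invariant almost complex structures.

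\emph{Anticipated obstacle.} The one place requiring care is the irreducibility-and-inequivalence claim in Step 1 — specifically making sure the conjugate-standard versus standard distinction is tracked correctly so that $\LG_{r|s}(\CC)$ and $\LG_{s|r}(\CC)$ are seen to be inequivalent $H$-modules (they would be equivalent if we carelessly forgot the conjugation). Everything after that is linear algebra already set up by \eqref{A_lambda} and \eqref{I_lambda}. I would also remark that these are only \emph{almost} complex structures at this stage; integrability of the $k!$ of them that are actually integrable is the subject of the next section and is not claimed here.
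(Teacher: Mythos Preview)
Your proof is correct and follows essentially the same line as the paper's: both identify the isotropy action block-wise as $V_{r|s}\mapsto u_r V_{r|s} u_s^{-1}$, recognize $\LG_{r|s}(\CC)\cong\CC^{|r|}\otimes\overline{\CC^{|s|}}$ as irreducible and pairwise inequivalent, and then derive (a)--(c) by Schur-type bookkeeping (the paper phrases this via the $\pm\i$-eigenspaces of $J_\l$ being sums of isotypic pieces, you via the Schur scalars $c_{rs}=\pm\i$; the content is identical). One small slip: $\dim_\RR\i\LX_{r|s}=2|r||s|$, not $|r||s|$; the count ``$|r||s|$ pluses'' in (b) refers to the \emph{complex} signature (cf.\ the last column heading in Table~1), so your conclusion is right but the sentence justifying it should invoke complex rather than real dimension.
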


\begin{proof}
   Using the notation of \eqref{stabilizer} and \eqref{A_lambda}, one checks without trouble that the isotropy action of $h=\diag(u_{s_1},\dots,u_{s_k})\in H$ takes block $\bloc rs$ of $V\in T_\l X$ to
   \begin{equation}
      h(\bloc rs)=u_r^{\vphantom{-1}}\bloc rs u_s\inv.
   \end{equation}
   So the $\LG_{r|s}(\CC)$ are $H$-invariant and the representation on each factors through the natural representation of $U_{|r|}\times U_{|s|}$ on $\smash{\Hom(\CC^{|s|},\CC^{|r|})}\cong\smash{\CC^{|r|}}\otimes\smash{\overline{\CC^{|s|}}}$. As these are irreducible and different for different pairs $(r,s)$, we obtain \eqref{decomposition}. Now $J_\l$ is determined by its $\pm$i-eigenspaces
   \begin{equation}
      \label{eigenspaces}
      T^\pm_\l X=\Im(J_\l\pm\underline\i\,)=\Ker(J_\l\mp\underline\i\,)
   \end{equation}
   which are (complex conjugate) $H$-invariant subspaces of \eqref{decomposition}, hence are each the sum of \emph{some} $\LG_{r|s}(\CC)$ \cite[Prop.~4.4d]{Bourbaki:2012} --- one per pair $(\LG_{r|s}(\CC), \LG_{s|r}(\CC))$. So they can only differ from those of $\I_\l$ \eqref{I_lambda} by the indicated sign flips, and we obtain (a, b, c).\qed
\end{proof}

\subsection{The invariant complex structures}

There remains to characterize which of the almost complex structures of Theorem \ref{almost_Js} are integrable.
\begin{theorem}
   \label{Js}
   We have
	\begin{subnumcases}{\label{.}\hspace{-1cm}\bigl[\LG_{p|q}(\CC),\LG_{r|s}(\CC)\bigr]=}
	   \phantom{\big(}0& if $s\ne p$; $q\ne r$\\
	   \phantom{\big(}\LG_{p|s}(\CC)& if $s\ne p$; $q= r$\\
	   \phantom{\big(}\LG_{r|q}(\CC)& if $s= p$; $q\ne r$\\
	   \big(\LG_{p|p}(\CC)+\LG_{q|q}(\CC)\big)\cap\mathfrak{sl}_n(\CC)& if $s=p$; $q=r$.
	\end{subnumcases}
	Consequently,
	\begin{compactenum}[\upshape(a)]
   \item An almost complex structure $J$ obtained as in Theorem \textup{\ref{almost_Js}a} is integrable iff it respects the \textbf{Chasles rule}: if the sign is flipped on $\i\LX_{r|s}$ and $\i\LX_{s|t}$ $(r<s<t)$\textup, then it is also flipped on $\i\LX_{r|t}$.
   \item Such is the case iff the preimage of $T^+_\l X$ \textup(see \eqref{eigenspaces}\textup) under the infinitesimal action \eqref{frolicher} is a \textbf{parabolic subalgebra} $\LP$ of $\LG(\CC)$\textup, containing $\LH(\CC)$.
   \end{compactenum} 
\end{theorem}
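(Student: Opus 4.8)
The plan is to prove the bracket formulas \eqref{.} first, then deduce (a) and (b). For \eqref{.}: write $Z\in\LG_{p|q}(\CC)$ as $Z=\sum_{i\in p,j\in q}z_{ij}E_{ij}$ and $W\in\LG_{r|s}(\CC)$ similarly, and compute $[E_{ij},E_{k\ell}]=\d_{jk}E_{i\ell}-\d_{\ell i}E_{kj}$. Since $i\in p,j\in q,k\in r,\ell\in s$ and the segments are disjoint, the first term survives only when $q=r$ (contributing to $\LG_{p|s}(\CC)$) and the second only when $s=p$ (contributing to $\LG_{r|q}(\CC)$). The four cases of \eqref{.} are exactly the four combinations of whether each of these coincidences holds; in the last case $q=r$ and $s=p$, both terms contribute, landing in $\LG_{p|p}(\CC)+\LG_{q|q}(\CC)$, and one checks the commutator is traceless, giving the stated intersection with $\mathfrak{sl}_n(\CC)$. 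This is a short elementary matrix computation.

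For (a), I would use the criterion \eqref{frolicher}: integrability of the $G$-invariant $J$ is equivalent to the preimage $\LP$ of $T^+_\l X$ under the infinitesimal action $\LG(\CC)\to T_\l X\oplus\i T_\l X$ being a Lie subalgebra. Since the isotropy action is trivial on $\LH(\CC)=\Ker(\ad_{\i\l})$ on the relevant part, $\LP$ always contains $\LH(\CC)$, and by \eqref{decomposition} it is $\LH(\CC)\oplus\bigoplus_{(r,s)\in E}\LG_{r|s}(\CC)$ for a set $E$ of ordered pairs containing exactly one of $(r,s),(s,r)$ for each unordered pair. Closure under bracket: using \eqref{.}, $\LH(\CC)$ normalizes each $\LG_{r|s}(\CC)$ (case $s=p,q=r$ with one block in $\LH$), the bracket of two off-diagonal pieces in cases 1--3 is either $0$ or lands in $\LG_{p|s}(\CC)$ resp.~$\LG_{r|q}(\CC)$, and case 4 lands inside $\LH(\CC)\cap\mathfrak{sl}_n(\CC)\subset\LH(\CC)$. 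So $\LP$ is a subalgebra iff for every $(p,q),(q,t)\in E$ we also have $(p,t)\in E$ --- i.e. $E$ is transitive. Translating: flipping the sign on $\i\LX_{r|s}$ means replacing $(r,s)$ by $(s,r)$ in the "standard" set $E_0=\{(r,s):r<s\}$ coming from $\I_\l$ (by \eqref{I_lambda}). Transitivity of $E$ is then precisely the Chasles rule as stated. I'd spell out the one genuinely substantive sub-point: transitivity of $E$ forces it to be a total order (irreflexive, transitive, and total since it contains one of each pair), hence $E$ is $\{(r,s):\sigma(r)<\sigma(s)\}$ for some permutation $\sigma$ of $\mathcal S$ --- this is the link to the $k!$ count and to part (b).

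For (b): given that $\LP=\LH(\CC)\oplus\bigoplus_{(r,s)\in E}\LG_{r|s}(\CC)$ is a subalgebra, $E$ is a total order on $\mathcal S$ given by some $\sigma\in\mathfrak S_k$, so after conjugating by the permutation matrix realizing $\sigma$, $\LP$ becomes the standard block upper triangular parabolic $\LH(\CC)\oplus\bigoplus_{\sigma(r)<\sigma(s)}\LG_{r|s}(\CC)$, which is manifestly parabolic (it contains the Borel of upper triangular matrices) and contains $\LH(\CC)$. Conversely any parabolic containing $\LH(\CC)$ is $\LH(\CC)$-invariant, hence a sum of isotypic pieces as above, and being a subalgebra its index set is transitive; a parabolic also satisfies $\LP+\bar\LP=\LG(\CC)$ and $\LP\cap\bar\LP=\LH(\CC)$, which forces $E$ to contain exactly one of each pair $(r,s),(s,r)$. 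So the parabolics $\supseteq\LH(\CC)$ correspond bijectively to total orders on $\mathcal S$, matching the integrable $J$'s. I expect the main obstacle to be purely expository: making the dictionary between "sign flips on the $\i\LX_{r|s}$", "the ordered-pair set $E$", "the $+i$-eigenspace $T^+_\l X$", and "the parabolic $\LP$" precise enough that the Chasles rule and transitivity visibly coincide --- the algebra itself (the bracket table \eqref{.} plus the Frölicher criterion) does all the real work.
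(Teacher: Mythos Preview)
Your proposal is correct and follows essentially the same route as the paper's (very terse) proof: the bracket relations \eqref{.} via elementary matrices $E_{ij}$ and the commutator $[E_{ij},E_{k\ell}]=\delta_{jk}E_{i\ell}-\delta_{\ell i}E_{kj}$, then (a) as a direct translation of the Fr\"olicher criterion \eqref{frolicher} into closure of the index set $E$ under composition, which is exactly the Chasles rule. The only noticeable difference is in (b): the paper dispatches it in one line by invoking Bourbaki's characterization of parabolic subalgebras (a subset $P$ of the roots with $P\cup(-P)=R$ and $P$ closed), which matches on the nose the two facts already in hand --- each $E_{ij}\notin\LH(\CC)$ lies in exactly one of $T^\pm_\l X$, and closure is Chasles. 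You instead argue constructively that transitivity makes $E$ a total order, hence $\LP$ is a permutation-conjugate of a block upper triangular subalgebra; this is fine and has the merit of previewing Theorem~\ref{bijection}, but your ``conversely'' clause (classifying all parabolics containing $\LH(\CC)$) goes beyond what (b) asks --- the statement concerns only the \emph{specific} preimage $\LP$, whose shape is already fixed, so the Bourbaki citation is the shorter path.
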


\begin{proof}
   Relations \eqref{.} follow from noting that $\LG_{r|s}(\CC)$ is the span of elementary matrices $E_{ij}=e_i\overline{e}_j$ for $(i,j)\in r\times s$, and computing $[e_i\overline{e}_j,e_k\overline{e}_l]$. Next (a) translates condition \eqref{frolicher} that the preimage in (b) be a subalgebra; and (b) translates, via \cite[Déf.~VIII.3.2]{Bourbaki:1975}, the observation made after \eqref{eigenspaces} that each $E_{ij}$ not in $\LH(\CC)$ is in either $T^+_\l X$ or $T^-_\l X$.\qed
\end{proof}

\begin{remark}
   Versions of Theorems \ref{almost_Js} and/or \ref{Js} valid for any compact $G$ can be found in \cite{Kostant:2010,%
   Arvanitoyeorgos:2003,%
   Alekseevskii:1998,%
   Alekseevskii:1997,%
   Vinberg:1990a,%
   Alekseevskii:1986,%
   Bourbaki:1982,%
   Siebenthal:1969,%
   Borel:1958}. (The latter didn't have the benefit of the \emph{parabolic} terminology introduced in \cite{Godement:1960}, but instead called \bit{roots of $J$} the roots $\alpha_{ij}=E_{ii}-E_{jj}\in\LT^*$ whose root space $\CC E_{ij}$ is in $T^+_\l X$.)
\end{remark}

\subsection{Parabolic subalgebras with a given Levi component}

Theorem \ref{Js}b reduces the classification of invariant complex structures $J$ on $X$ to describing the set $\mathcal P(\LH)$ of parabolic subalgebras $\LP$ of $\LG(\CC)$ whose Levi component is $\LH(\CC)$ (see \eqref{stabilizer}; this set is discussed in e.g.~\cite[p.\,8]{Arthur:1981}, \cite[§5]{Dan-Cohen:2011}). We claim:

\begin{theorem}
   \label{bijection}
   $\mathcal P(\LH)$ is in natural bijection with the symmetric group $\mathfrak S_k$.
\end{theorem}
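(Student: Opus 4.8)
The plan is to set up the bijection explicitly and then verify it is well-defined in both directions. First I would recall from Theorem~\ref{Js}b that an element of $\mathcal P(\LH)$ is the same thing as a $G$-invariant complex structure $J$, which by Theorem~\ref{almost_Js}a is determined by a choice of sign $\sigma(r,s)\in\{+,-\}$ for each unordered pair $\{r,s\}$ of segments (with $\sigma(s,r)=-\sigma(r,s)$ when we orient the pair), subject by Theorem~\ref{Js}a to the \textbf{Chasles rule}: $\sigma(r,s)=\sigma(s,t)=+$ forces $\sigma(r,t)=+$ whenever $r<s<t$ in the natural order. The natural guess is that such a consistent sign assignment is exactly a total order on the $k$ segments: given $\pi\in\mathfrak S_k$, declare $\sigma(s_i,s_j)=+$ iff $\pi\inv(i)<\pi\inv(j)$ (i.e.\ $s_i$ precedes $s_j$ in the $\pi$-order), and let $\LP_\pi$ be the corresponding parabolic --- the block-``upper-triangular'' subalgebra for that reordering of the blocks, equivalently the preimage under \eqref{frolicher} of $T^+_\l X=\bigoplus_{\sigma(r,s)=+}\LG_{r|s}(\CC)$.

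The key steps, in order. (1) Check that for each $\pi\in\mathfrak S_k$ the sign rule above satisfies the Chasles rule, so $\LP_\pi\in\mathcal P(\LH)$: this is immediate since ``$s_i$ precedes $s_j$ in a fixed linear order'' is a transitive relation, and one notes $\LP_\pi$ really is parabolic with Levi $\LH(\CC)$ because conjugating by the permutation matrix sending the standard block order to the $\pi$-order carries the standard block-upper-triangular parabolic (Levi $=$ block diagonal) to $\LP_\pi$. (2) Conversely, show every $\LP\in\mathcal P(\LH)$ arises this way: given the Chasles-consistent signs $\sigma$, define a relation $s_i\preceq s_j$ iff $i=j$ or $\sigma(s_i,s_j)=+$; the Chasles rule is precisely transitivity, antisymmetry is built in from $\sigma(s,r)=-\sigma(r,s)$, and totality holds because every pair gets a sign --- so $\preceq$ is a total order on a $k$-element set, hence given by a unique $\pi\in\mathfrak S_k$. (3) Check the two constructions are mutually inverse, which is tautological once one matches definitions. (4) Finally observe that distinct $\pi$ give distinct sign patterns on at least one pair, hence distinct $T^+_\l X$, hence distinct $\LP_\pi$ (injectivity), so the map is a genuine bijection and $\#\mathcal P(\LH)=k!$.

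The one genuine subtlety --- and I expect this to be the main obstacle --- is the direction that goes from a Chasles-consistent sign assignment to a total order. The Chasles rule as stated in Theorem~\ref{Js}a is phrased only for triples with $r<s<t$ in the \emph{given} order, i.e.\ it asserts transitivity of ``precedes'' only along chains compatible with the reference order; one must check this suffices to force transitivity for \emph{all} triples (equivalently, that the relation $\preceq$ has no $3$-cycles regardless of how the three segments sit in the reference order). This is a small but real combinatorial lemma: a tournament on $k$ vertices all of whose $3$-cycles are forbidden is a total order, and one needs to see that the restricted Chasles rule already forbids every $3$-cycle. I would dispatch it by a short case analysis on the relative reference-order of $r,s,t$ (six cases, but they pair up), reducing each to an instance of the stated rule; alternatively, phrase the whole argument in terms of the parabolic directly and invoke that a parabolic containing a fixed Levi is conjugate to a standard one by a Weyl group element normalizing that Levi, the coset space being exactly $N_W(\LH)/W_{\LH}\cong\mathfrak S_k$ here. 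Everything else is bookkeeping.
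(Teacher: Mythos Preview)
Your proposal is correct and takes a genuinely different route from the paper. You argue combinatorially: by Theorems~\ref{almost_Js}a and~\ref{Js}, an element of $\mathcal P(\LH)$ is a sign assignment on pairs of segments satisfying the Chasles rule, and you show such assignments are exactly the total orders on $\mathcal S$, hence indexed by $\mathfrak S_k$. The paper instead constructs the map $\pi\mapsto\LP$ concretely: it inflates $\pi\in\mathfrak S_k$ to a block-permutation matrix $\underline\pi\in\mathfrak S_n$ and sets $\LP$ equal to the $\underline\pi$-conjugate of the standard block-upper-triangular parabolic; for the inverse it \emph{collapses} every block of a given $\LP$ to size $1\times1$, obtaining a Borel of $\LGL_k(\CC)$ containing the diagonals, and then cites Chevalley's theorem that such a Borel is conjugate to the upper-triangular one by a unique element of $\mathfrak S_k$. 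Your approach is more elementary and self-contained; the paper's pays for itself by producing the explicit matrix $\underline\pi$, which is reused downstream (in \eqref{p} and Theorem~\ref{realization}). Regarding the subtlety you flag: it is real --- the Chasles rule as literally stated in Theorem~\ref{Js}a forbids only one of the two possible $3$-cycle orientations on $\{r,s,t\}$ --- but you can bypass the six-case analysis entirely by appealing to Theorem~\ref{Js}b rather than~\ref{Js}a: knowing that $\LP$ is a \emph{subalgebra} gives full transitivity of $\preceq$ directly from the bracket relations~\eqref{.}. Your closing alternative (parabolics with a fixed Levi up to $N_W(\LH)/W_{\LH}$) is in fact essentially the paper's collapse-plus-Chevalley argument in different clothing.
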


\begin{proof}
   We describe the construction of the bijection in general and illustrate it on the case where $\l$ in \eqref{dominant_chamber} has $k=3$ eigenvalues with multiplicities $1,2,1$, say $\l_1>\l_{2,3}>\l_4$:
   \begin{equation}
      \label{lambda}
      \l
      =
      \left(
      \begin{array}{c|c|c}
         \A&\phantom\lined&\\\hline
         \phantom\stacked&\underline\B&\phantom\stacked\\\hline
         &&\C
      \end{array}
      \right).
   \end{equation}
   Let a permutation $\pi\in\mathfrak S_k$ be given. Regard it as acting on the $k$ letters $\l_{s_1},\dots,\l_{s_k}$ and rearrange the blocks of \eqref{dominant_chamber} accordingly, obtaining here e.g.
   \begin{equation}
      \label{lambda_prime}
      \l'
      =
      \left(
      \begin{array}{c|c|c}
         \underline\B&\phantom\stacked&\phantom\stacked\\\hline
         \phantom\lined&\C&\\\hline
         &&\A
      \end{array}
      \right).
   \end{equation}
   Next, form the $n\times n$ matrix $\underline\pi$ whose columns are the standard basis vectors in the order that indices appear in $\l'$: in our case
   \begin{equation}
      \label{undertilde_pi}
      \underline\pi
      =
      \begin{pmatrix}e_2&e_3&e_4&e_1\end{pmatrix}
      =
      \left(
      \begin{array}{c|c|c}
         \phantom\lined&&1\\\hline
         \underline 1&\phantom\stacked&\\\hline
         &1&
      \end{array}
      \right).
   \end{equation}
   This is by construction a (``uniform block'') permutation matrix  $\underline\pi\in\mathfrak S_n$ such that $\underline\pi\,\l'\,\overline{\underline\pi}=\l$ \cite{Aguiar:2008,Taussky:1961}.  Now let $\LP$ be the $\underline\pi$-conjugate of block upper triangular matrices of shape \eqref{lambda_prime}, i.e.~(with both $\cdot\,$s and $+$s denoting arbitrary entries)
   \begin{equation}
      \label{p}
      \LP:=
      \underline\pi
      \left(
      \begin{array}{c|c|c}
         \bullets&\stacked&\stacked\\\hline
         \phantom\lined&\cdot&+\\\hline
         &&\cdot
      \end{array}
      \right)
      \overline{\underline\pi}
      =
      \left(
      \begin{array}{c|c|c}
         \cdot&\phantom\lined&\\\hline
         \stacked&\bullets&\stacked\\\hline
         +&&\cdot
      \end{array}
      \right).
   \end{equation}
   This is clearly a subalgebra of the form required by Theorem \ref{Js}, i.e.~obtained by sign flips from the block upper triangular decoration of \eqref{lambda} (see \eqref{I_lambda}).
   
   Conversely, let $\LP\in\mathcal P(\LH)$ be given --- e.g.~the one in \eqref{p}. It is a parabolic containing $\LH(\CC)$ \eqref{stabilizer}, with half all off-diagonal blocks marked $+$ after Theorem \ref{almost_Js}a. Now \emph{collapse} all blocks to size $1\times 1$: $\LP$ becomes a Borel $\LB\subset\LGL_k$ containing the diagonals. By \cite[Cor.~3]{Chevalley:1957a}, $\LB$ is conjugate to the upper triangular Borel by a unique permutation matrix $\pi\in\mathfrak S_k$, which is the one we attach to $\LP$.
   
   One checks without trouble that the maps $\pi\mapsto\LP$ and $\LP\mapsto\pi$ thus defined are each other's inverse.\qed
\end{proof}

\begin{remark}
   The cases $k=2$ and $k=n$ of Theorem \ref{bijection} are due to Borel and Hirzebruch, who observed that all $J$s are then related by the action of complex conjugation ($k=2$) or the Weyl group ($k=n$) \cite[{}13.8]{Borel:1958}, \cite[Exerc. 4.8e]{Bourbaki:1982}.
   But in general our bijection \bit{does not} arise from a geometrical action of $\mathfrak S_k$ on $X=G/H$. In fact, as stated in \cite[p.\,506]{Borel:1958} and detailed in \cite[p.\,44]{Nishiyama:1984}, any diffeomorphism transforming one invariant complex structure into another must come from the natural action, $a(gH)=a(g)H$, of some $a$ belonging to the stabilizer of $H$ in the automorphism group
   \begin{equation}
      \Aut(G)=\ZZ_2\ltimes\Inn(G).
   \end{equation}
   Here $\Inn(G)$ is inner automorphisms and $\ZZ_2$ is the effect of complex conjugation; see \cite[Exerc.\,4.3]{Bourbaki:1982}, \cite[Thm 1.5]{Shankar:2001}. As  $\ZZ_2$ preserves $H$, and $\Inn(g)$ preserves $H$ iff $g$ is in the normalizer $N_G(H)$, and $\Inn(h)$ ($h\in H$) preserves any $G$-invariant $J$, we see that things boil down to an action of
   \begin{equation}
      \label{group_acting}
      \ZZ_2\ltimes (N_G(H)/H).
   \end{equation}
   The Weyl-like quotient $N_G(H)/H$ is computed in \cite[Cor.\,12.11]{Malle:2011} and isomorphic to the subgroup of those $\sigma\in\mathfrak S_n$ that send each segment of the partition \eqref{partition} to a same-sized segment, modulo the $\sigma$ that take each segment to itself. When all segments have different sizes, that is trivial and so \eqref{group_acting} is far from able to account for all $|\mathfrak S_k|=k!$ structures.
\end{remark}

\begin{remark}
   Extending Theorem \ref{bijection} to compact groups of other types seems challenging, which may explain its apparent absence from the literature. The role of $\mathfrak S_k$ should presumably be taken over by a putative Weyl ``group'' of either the \bit{quotient systems} of \cite[{}12.18]{Loos:2004} or the \bit{$T$-root systems} of \cite{Alekseevskii:1986,Alekseevskii:1997,Alekseevskii:1998,Arvanitoyeorgos:2003,Kostant:2010} (their $T$ is our $S$ from (\ref{over_centralizer}, \ref{projections})). One would also need to generalize the rather mysterious (to us) map $\pi\mapsto\underline\pi$.
\end{remark}

\subsection{Example: The adjoint variety}
\label{adjoint_variety}

\begin{table}[t]
   \caption{The adjoint variety's $6$ complex structures}
   \resizebox{\textwidth}{!}{%
   \begin{tabular}{|c|c|c|c|}
      \hline
      \begin{tabular}{c}
      	permutation\\$\pi\in\mathfrak S_3$
      \end{tabular}
      &
      \begin{tabular}{c}
      	permutation\\$\underline\pi\in\mathfrak S_4$
      \end{tabular}
      &
      \begin{tabular}{c}
      	parabolic subalgebra\\$\LP\in\mathcal P(\LH)$
      \end{tabular}
      & 
      \begin{tabular}{c}
         signature\\of $\w(J\,\cdot,\cdot)$\\(over $\CC$)
      \end{tabular}\\\hline\hline
      $\l_1\,\l_{2,3}\,\l_4$
      &
      $\left(
      \begin{array}{c|c|c}
         1&\phantom\lined&\phantom+\\\hline
         &\underline 1&\phantom\stacked\\\hline
         &&1
      \end{array}
      \right)$
      &
      $\underline\pi
      \left(
      \begin{array}{c|c|c}
         \cdot&\lined&+\\\hline
         &\bullets&\stacked\\\hline
         \phantom+&&\cdot
      \end{array}
      \right)
      \overline{\underline\pi}
      =
      \left(
      \begin{array}{c|c|c}
         \cdot&\lined&+\\\hline
         &\bullets&\stacked\\\hline
         \phantom+&&\cdot
      \end{array}
      \right)
      $ 
      &
      $(5,0)$
      \\\hline
      $\l_4\,\l_{2,3}\,\l_1$
      &
      $\left(
      \begin{array}{c|c|c}
         &\phantom\lined&1\\\hline
         &\underline 1&\phantom\stacked\\\hline
         1&&
      \end{array}
      \right)
      $
      &
      $\underline\pi
      \left(
      \begin{array}{c|c|c}
         \cdot&\lined&+\\\hline
         &\bullets&\stacked\\\hline
         \phantom+&&\cdot
      \end{array}
      \right)
      \overline{\underline\pi}
      =
      \left(
      \begin{array}{c|c|c}
         \cdot&&\phantom+\\\hline
         \stacked&\bullets&\\\hline
         +&\lined&\cdot
      \end{array}
      \right)
      $
      &
      $(0,5)$
      \\\hline
      $\l_1\,\l_4\,\l_{2,3}$
      &
      $\left(
      \begin{array}{c|c|c}
         1&&\phantom\lined\\\hline
         &\phantom\stacked&\underline 1\\\hline
         &1&
      \end{array}
      \right)
      $
      &
      $\underline\pi
      \left(
      \begin{array}{c|c|c}
         \cdot&+&\lined\\\hline
         \phantom+&\cdot&\lined\\\hline
         &\phantom\stacked&\bullets
      \end{array}
      \right)
      \overline{\underline\pi}
      =
      \left(
      \begin{array}{c|c|c}
         \cdot&\lined&+\\\hline
         &\bullets&\phantom\stacked\\\hline
         \phantom+&\lined&\cdot
      \end{array}
      \right)
      $
      &
      $(3,2)$
      \\\hline
      $\l_4\,\l_1\,\l_{2,3}$
      &
      $\left(
      \begin{array}{c|c|c}
         &1&\phantom\lined\\\hline
         &\phantom\stacked&\underline 1\\\hline
         1&&
      \end{array}
      \right)
      $
      &
      $\underline\pi
      \left(
      \begin{array}{c|c|c}
         \cdot&+&\lined\\\hline
         \phantom+&\cdot&\lined\\\hline
         &\phantom\stacked&\bullets
      \end{array}
      \right)
      \overline{\underline\pi}
      =
      \left(
      \begin{array}{c|c|c}
         \cdot&\lined&\phantom+\\\hline
         &\bullets&\phantom\stacked\\\hline
         +&\lined&\cdot
      \end{array}
      \right)
      $
      &
      $(2,3)$
      \\\hline
      $\l_{2,3}\,\l_1\,\l_4$
      &
      $\left(
      \begin{array}{c|c|c}
         \phantom\lined&1&\\\hline
         \underline 1&\phantom\stacked&\\\hline
         &&1
      \end{array}
      \right)
      $
      &
      $\underline\pi
      \left(
      \begin{array}{c|c|c}
         \bullets&\stacked&\stacked\\\hline
         \phantom\lined&\cdot&+\\\hline
         &&\cdot
      \end{array}
      \right)
      \overline{\underline\pi}
      =
      \left(
      \begin{array}{c|c|c}
         \cdot&\phantom\lined&+\\\hline
         \stacked&\bullets&\stacked\\\hline
         &&\cdot
      \end{array}
      \right)
      $
      &
      $(3,2)$
      \\\hline
      $\l_{2,3}\,\l_4\,\l_1$
      &
      $\left(
      \begin{array}{c|c|c}
         \phantom\lined&&1\\\hline
         \underline 1&\phantom\stacked&\\\hline
         &1&
      \end{array}
      \right)
      $
      &
      $\underline\pi
      \left(
      \begin{array}{c|c|c}
         \bullets&\stacked&\stacked\\\hline
         \phantom\lined&\cdot&+\\\hline
         &&\cdot
      \end{array}
      \right)
      \overline{\underline\pi}
      =
      \left(
      \begin{array}{c|c|c}
         \cdot&\phantom\lined&\\\hline
         \stacked&\bullets&\stacked\\\hline
         +&&\cdot
      \end{array}
      \right)
      $
      &
      $(2,3)$
      \\\hline
   \end{tabular}}
\end{table}

The orbit with dominant element \eqref{lambda} we have used as a running example is the \bit{adjoint variety} $U_4/(U_1\times U_2\times U_1)$, studied in \cite[{}13.9]{Borel:1958}, \cite{Boothby:1961,Kaji:1998,Landsberg:2002,Hirzebruch:2005}. Table 1 traces the construction of the entire bijection $\pi\mapsto\LP$ in this case. Note how
\begin{itemize}
	\item Of all $2^{3(3-1)/2}=8$ possible sign flips on the top right matrix, the two not reached are precisely those failing the Chasles rule (Theorem \ref{Js}a): 
	\begin{equation}
	   \left(
	   \begin{array}{c|c|c}
	      \cdot&\lined&\\\hline
	      &\bullets&\stacked\\\hline
	      +&&\cdot
	   \end{array}
	   \right)
	   \qquad\text{and}\qquad
	   \left(
	   \begin{array}{c|c|c}
	      \cdot&&+\\\hline
	      \stacked&\bullets&\\\hline
	      &\lined&\cdot
	   \end{array}
	   \right).
	\end{equation}
	\item Because \eqref{lambda} has same-sized blocks,  \eqref{group_acting} is a four-group $\ZZ_2\times\ZZ_2$; it has two orbits on $\mathcal P(\LH)$: the first two rows of the table, and the other four. 
	\item Signatures can be read off as (number of $+$s above, number of $+$s below) the diagonal; this transparently recovers the algorithm of \cite[§4]{Yamada:2014}.
\end{itemize}

\begin{remark}
   A dominant $\l$ \emph{with multiplicities} $1,1,2$ can of course lead also to metrics of signature $(4,1)$ or $(1,4)$ on the ``same'' manifold $U_4/(U_1\times U_1\times U_2)$: signature depends not only on $J$ but also on the chosen $\w$ (or coadjoint orbit). 
\end{remark}

\section{The invariant complex structures realized: $k!$ eigenflag embeddings}

Theorem \ref{Js} only spells out complex structures by giving the effect of $J$ \emph{at the base point} $\l$. At any other point $x=g(\l)$, computation of $J_x=gJ_\l g\inv$ requires use of some $g\in G$, on whose nonuniqueness the outcome is known not to depend. Our goal below is a more tangible picture where $J_x$ can be explicit in terms of $x$ alone, as in (\ref{I_and_g}, \ref{interpol}, \ref{grassmann_I}). We freely use the notation introduced in (\ref{dominant_chamber}--\ref{partition}, \ref{grassmannian}--\ref{-}).

\subsection{Maps to products of Grassmannians}
\label{canonical_redux}

A first idea is to note that spectral decomposition expresses each $x\in X$ as a linear combination of eigenprojectors, $E_s\in \Gr_{|s|}$, belonging to the (fixed) eigenvalues $\l_s$:
\begin{equation}
   \label{spectral_decomposition}
   x=\sum_{s\in\mathcal S}\l_sE_s,
   \qquad\text{where}\qquad
   E_s=\prod_{r\in\mathcal S\smallsetminus\{s\}}\frac{(x-\l_r)}{(\l_s-\l_r)}
\end{equation}
(Lagrange interpolation \cite[§6.7]{Hoffman:1971}). So sending $x$ to $y=(E_s)_{s\in\mathcal S}$ embeds $X$ $G$\nobreakdash-equiv\-ari\-antly as a submanifold $Y$ of a product $\prod_{s\in\mathcal S}\Gr_{|s|}$ of Grassmannians \eqref{grassmannian}, hopefully pulling product structures back to useful ones on $X$. Alas, Theorem \ref{complex_submanifold} below dashes this hope: $Y$ \emph{isn't a complex submanifold} of the product, so there is no complex structure to transport back. Fortunately, the same Theorem will also indicate the way out.

To state it, note that the $E_s$ are just a small part of $x$'s \bit{spectral measure} $A \mapsto E_A$ which maps \emph{subsets of $\mathcal S$} (or alternatively, of the spectrum $\{\l_s:s\in\mathcal S\}$) to  projectors
\begin{equation}
   \label{spectral_measure}
   E_A=\sum_{s\in A}E_s\in \Gr_{\|A\|},
   \qquad\qquad
   \|A\| := \sum_{s\in A}|s|,
\end{equation}
with the property that $E_{A\,\cap\,B}=E_A E_B$ (so the $E_A$ all commute). Thus, not only the singletons but any subfamily $\mathcal A\subset 2^{\mathcal S}$ gives rise to a $G$-equivariant map, $x\mapsto (E_A)_{A\in\mathcal A}$, from $X$ to a product of Grassmannians.  

\begin{theorem}
   \label{complex_submanifold}
   The image $Y$ of this map is a complex submanifold of $\prod_{A\in\mathcal A}\Gr_{\|A\|}$ (for the product complex structure) iff $\mathcal A$ is totally ordered by inclusion. 
\end{theorem}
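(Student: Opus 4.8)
I would reduce the question to the base point $\l$ by equivariance and then compute the differential of the map $\phi\colon x\mapsto(E_A)_{A\in\mathcal A}$ one root at a time. Since $\phi$ is a $G$-equivariant smooth map out of the compact homogeneous space $X=G/H$, its image $Y$ is an embedded submanifold, diffeomorphic to $G/G_y$ for $G_y$ the closed stabilizer of $y:=\phi(\l)$; and because $G$ acts on $\prod_{A}\Gr_{\|A\|}$ by conjugation in each factor --- hence by biholomorphisms, the Grassmannian structure $\I\,\d x=[\i x,\d x]$ being $G$-invariant --- and preserves $Y$, the real tangent space $T_{y'}Y$ is invariant under the product complex structure $J$ for \emph{some} $y'\in Y$ iff it is so for \emph{all} $y'\in Y$. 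So $Y$ is a complex submanifold iff $T_yY$ is $J$-invariant, equivalently iff its complexification is stable under the complexified $J$; this I verify by computing $\mathrm d\phi$ at $\l$. Throughout put $P_A=\bigcup_{s\in A}s\subseteq\{1,\dots,n\}$, so that $E_A=\sum_{i\in P_A}E_{ii}$ and $A\subseteq B\iff P_A\subseteq P_B$.

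Since each $E_A$ is a conjugation-equivariant polynomial in $x$ (a spectral projector), the curve $x(t)=\exp(tZ)\,\l\,\exp(-tZ)$ satisfies $E_A(x(t))=\exp(tZ)\,E_A\,\exp(-tZ)$, so $\mathrm d\phi_\l$ sends $[Z,\l]$ to $([Z,E_A])_{A\in\mathcal A}$. By \eqref{decomposition} the complexified tangent space $T_\l X\oplus\i T_\l X$ is spanned by the elementary matrices $E_{ij}=e_i\overline{e}_j$ with $i,j$ in distinct segments $r\ne s$; taking $Z=\tfrac1{\l_s-\l_r}E_{ij}$ gives $[Z,\l]=E_{ij}$ and $[Z,E_A]=\tfrac1{\l_r-\l_s}\,c_A(i,j)\,E_{ij}$, where $c_A(i,j)\in\{1,-1,0\}$ equals $1$ if $P_A$ contains $i$ but not $j$, $-1$ if it contains $j$ but not $i$, and $0$ otherwise. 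Hence $\mathrm d\phi_\l(E_{ij})=\tfrac1{\l_r-\l_s}\bigl(c_A(i,j)\,E_{ij}\bigr)_{A\in\mathcal A}$; and since distinct pairs $(i,j)$ occupy distinct matrix entries in each factor, these images are linearly independent, so $W:=\mathrm d\phi_\l\bigl(T_\l X\oplus\i T_\l X\bigr)$ is the direct sum, over the off-diagonal pairs $(i,j)$, of the lines $L_{ij}:=\CC\cdot\bigl(c_A(i,j)\,E_{ij}\bigr)_{A}$. By \eqref{grassmann_I} the complex structure of the $A$-th factor sends $E_{ij}$ to $[\i E_A,E_{ij}]=\i\,c_A(i,j)\,E_{ij}$, so the complexified product structure maps $\bigl(c_A(i,j)\,E_{ij}\bigr)_A$ to $\i\bigl(c_A(i,j)^2\,E_{ij}\bigr)_A$, still within the ``$E_{ij}$-slot''. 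Hence $W$ is stable under it iff every $L_{ij}$ is, i.e.\ iff $(c_A(i,j)^2)_A$ is a scalar multiple of $(c_A(i,j))_A$ --- which, the entries lying in $\{0,\pm1\}$, holds iff all the nonzero $c_A(i,j)$ are equal. To sum up: $Y$ is a complex submanifold iff for every $i\ne j$ in distinct segments, all members $A$ of $\mathcal A$ whose index set $P_A$ separates $i$ from $j$ do so the same way --- all containing $i$, or all containing $j$.

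It remains to match this combinatorial condition with total orderedness of $\mathcal A$ by inclusion. If $\mathcal A$ is a chain and $P_A\subseteq P_B$ both separate a pair $i,j$, then $i\in P_A$ forces $i\in P_B$, hence --- only one of $i,j$ lying in $P_B$ --- $j\notin P_B$, so $P_B$ again separates ``with $i$''; the case $j\in P_A$ is symmetric. Conversely, if $\mathcal A$ is not a chain, pick $A,B\in\mathcal A$ with $A\not\subseteq B$ and $B\not\subseteq A$, and choose $i\in P_A\smallsetminus P_B$, $j\in P_B\smallsetminus P_A$; since $P_A$ and $P_B$ are unions of segments, $i$ and $j$ lie in distinct segments, so $(i,j)$ is a genuine off-diagonal direction, and $P_A$ separates it ``with $i$'' while $P_B$ separates it ``with $j$'', in violation of the condition.

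The step I expect to need the most care is the first one: recording that $Y$ is an embedded submanifold and that $J$-invariance of its tangent need only be checked at $\l$, and --- above all --- observing that $\mathrm d\phi_\l$ breaks up as a direct sum of ``coordinate lines'' indexed by roots, which is exactly what makes the single linear-algebra condition decouple into one scalar test per pair $(i,j)$. Once that is in place, the per-root computation and the chain argument are both short.
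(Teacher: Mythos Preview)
Your proof is correct and follows a genuinely different route from the paper's. The paper treats the two implications separately: for the forward direction it writes down defining equations $E_BE_A=E_A$ of $Y$ (valid at an arbitrary point), differentiates them, and checks by bare-hands matrix algebra that the product $\I$ preserves the resulting tangent relation \eqref{TY}; for the converse it builds a specific tangent vector from rank-one matrices $u\overline v-v\overline u$ and shows that its $\I$-image violates the commutator relation obtained by differentiating $[E_A,E_B]=0$. You instead reduce by equivariance to the base point $\l$ and diagonalize $\mathrm d\phi_\l$ along the root lines $\CC E_{ij}$, extracting a single combinatorial criterion --- that for each $(i,j)$ all nonzero $c_A(i,j)$ share a sign --- which dispatches both directions at once and makes the match with the chain condition on $\mathcal A$ immediate. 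Your approach is tidier and more uniformly Lie-theoretic; the paper's has the incidental benefit of exhibiting $Y$ by explicit polynomial equations \eqref{Y}, which are reused in the sequel. Your self-assessment is accurate: the one substantive move is the root-by-root splitting of $T_yY\oplus\i T_yY$ into the lines $L_{ij}$ (together with the observation that the ambient $J$ respects the $E_{ij}$-slots, so $J$-stability of $W$ decouples into $J$-stability of each $L_{ij}$); once that is in place the rest is bookkeeping.
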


\begin{proof}
   First note that as $G$ is transitive on $X$, the map's equivariance (visible on \eqref{spectral_decomposition}) ensures that $Y$ is an orbit of a smooth group action, hence as always an (``initial'') submanifold \cite[Prop.~10.1.14]{Hilgert:2012a}.
   
   Assume that $\mathcal A\subset 2^{\mathcal S}$ is totally ordered by inclusion. Then a tuple $(E_A)_{A\in\mathcal A}$ in $\prod_{A\in\mathcal A}\Gr_{\|A\|}$ is a member $y\in Y$ iff it satisfies
   \begin{equation}
      \label{Y}
      E_B E_A = E_A
   \end{equation}
   for all pairs $A\subset B$ in $\mathcal A$ (the reverse order follows by taking adjoints); and a tangent vector $\d y = (\d E_A)_{A\in\mathcal A}$ is in $T_yY$ iff we also have the derived relation
   \begin{equation}
      \label{TY}
      \d\EB.\EA + \EB.\d\EA = \d\EA.
   \end{equation}
   Assume \eqref{TY}. Multiplying it on the left by $\EB$ gives $\EB.\d\EB.\EA=0$ and hence
   \begin{equation}
      \begin{aligned}
      	\I\d\EB.\EA + \EB.\I\d\EA
      	&= [\i\EB,\d\EB]\EA + \EB[\i\EA,\d\EA]\\
      	&=-\i\d\EB.\EB.\EA + \i\EB.\EA.\d\EA - \i\EB.\d\EA.\EA\\
      	&=\i\EA.\d\EA - \i(\d\EB.\EA+\EB.\d\EA).\EA\\
      	&=\i\EA.\d\EA-\i\d\EA.\EA\\
      	&=[\i\EA,\d\EA]\\
      	&=\I\d\EA.
      \end{aligned}
   \end{equation}
   Thus we see that $\I\d y$ also satisfies \eqref{TY}. This confirms that the product complex structure preserves $T_yY$.
   
   Conversely, assume that $\mathcal A$ is not totally ordered. So there are $A,B\in\mathcal A$ such that $A\not\subset B$ and $B\not\subset A$. Pick $r\in A\smallsetminus B$ and $s\in B\smallsetminus A$ and nonzero eigenvectors $u,v\in\CC^n$ for eigenvalues $\l_r,\l_s$ of $x$; thus we have
   \begin{equation}
      \label{proj_u_v}
      \EA u=u,\qquad
      \EA v=0,\qquad
      \EB u=0,\qquad
      \EB v=v.
   \end{equation}
   Now put $Z=u\overline v - v\overline u\in\LG$ and consider the image $\d y\in T_yY$ of $\d x:=[Z,x]\in T_xX$. By equivariance and \eqref{proj_u_v}, its components in $T_{\EA}\!\Gr_{\|A\|}$ and $T_{\EB}\!\Gr_{\|B\|}$ are respectively
   \begin{equation}
      \label{dx}
	   \begin{gathered}
	      \quad\d\EA=[Z,\EA]=
	      [u\overline v - v\overline u,\EA]=
	      -u\overline v - v\overline u,\\
	      \quad\d\EB=[Z,\EB]=
	      [u\overline v - v\overline u,\EB]=
	      u\overline v + v\overline u.
	   \end{gathered}
   \end{equation}
	They (of course) satisfy the relation $[\d\EA,\EB]+[\EA,\d\EB]=0$ which any tangent vector to $Y$ must, as one sees by deriving $[\EA,\EB]=0$. On the other hand, we claim that $\I\d\EA$ and $\I\d\EB$ \emph{fail} that relation. Indeed \eqref{grassmann_I} gives
   \begin{equation}
	   \begin{gathered}
	      \I\d\EA = [\i \EA,\d\EA] = \i(v\overline u - u\overline v)=\i Z,\\
	      \I\d\EB = [\i \EB,\d\EB] = \i(v\overline u - u\overline v)=\i Z,
	   \end{gathered}
   \end{equation}
   whence (using \eqref{dx})
   \begin{equation}
      \begin{aligned}
	      {}[\I\d\EA,\EB]+[\EA,\I\d\EB]
	      &=[\i Z, \EB-\EA]\\
	      &=\i(\d\EB-\d\EA)\\
	      &=2\i(u\overline v + v\overline u)\ne 0.
      \end{aligned}
   \end{equation}
   Thus the product complex structure fails to preserve $T_yY$, as claimed.\qed
\end{proof}

\subsection{The eigenflag embeddings}

Choosing $\mathcal A=\{\{s_{\pi(1)},\dots,s_{\pi(i)}\}:i=1,\dots,k\}$ in Theorem \ref{complex_submanifold}, we obtain our main result which provides
\begin{compactenum}[•]
	\item for $\pi=\1$, an independent reconstruction of the Kähler structure (\ref{2-form}, \ref{I_and_g});
	\item for other $\pi\in\mathfrak S_k$, explicit models of $X$ with every pseudo-Kähler structure:
\end{compactenum}

\begin{theorem}
   \label{realization}
   Let $\pi\in\mathfrak S_k$ give rise to complex structure $J$ and metric $\mathrm h=\w(J\,\cdot,\cdot)$ \textup(Theorems \textup{\ref{Js}, \ref{bijection})} and write $A_i=\{s_{\pi(1)},\dots,s_{\pi(i)}\}$ where $\{s_1,\dots,s_k\}$ is the partition \eqref{partition}. Then the coadjoint orbit $X$ with pseudo-Kähler structure $(J,\mathrm h,\w)$ is isomorphic to the orbit $Y$ of $\smash{(\varpi_{\|A_i\|})_{i=1}^k}$ in $\prod\!{}_{i=1}^k \Gr_{\|A_i\|}$ endowed with the product complex structure and the metric and $2$-form
   \begin{equation}
      \label{g_omega}
      \sum_{i=1}^k(\l_{s_{\pi(i)}}-\l_{s_{\pi(i+1)}})\g_{\|A_i\|},
      \qquad\qquad
      \sum_{i=1}^k(\l_{s_{\pi(i)}}-\l_{s_{\pi(i+1)}})\w_{\|A_i\|},
   \end{equation}
   where $(\Gr_m,\I_m,\g_m,\w_m)$ is the Grassmannian \textup{(\ref{grassmannian}, \ref{-})} and we set $\l_{s_{\pi(k+1)}}=0$. The (moment) map from $Y$ to $X$ and inverse map from $X$ to $Y$ are respectively\textup, with $E_{A_i}$ defined by \textup{(\ref{spectral_decomposition}, \ref{spectral_measure}),}
   \begin{equation}
      \label{moment}
      (y_{\|A_i\|})_{i=1}^k\mapsto \sum_{i=1}^k(\l_{s_{\pi(i)}}-\l_{s_{\pi(i+1)}})y_{\|A_i\|}
      \qquad\text{and}\qquad
      x\mapsto(E_{A_i})_{i=1}^k.
   \end{equation}
\end{theorem}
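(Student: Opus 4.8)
The plan is to realize the isomorphism by the map $\Phi\colon x\mapsto(E_{A_i})_{i=1}^k$ of \eqref{moment}, with inverse the linear map $\Psi\colon(y_i)_i\mapsto\sum_i(\l_{s_{\pi(i)}}-\l_{s_{\pi(i+1)}})y_i$, and then to transport the three pieces of structure one at a time --- leaning on Theorem~\ref{complex_submanifold} for the complex structure, on the Grassmannian formulas \eqref{-} for base-point computations, and on a moment-map argument for the $2$-form; the metric will then follow formally. First, a summation by parts using $E_{A_0}=0$, $E_{A_i}-E_{A_{i-1}}=E_{s_{\pi(i)}}$ and $\l_{s_{\pi(k+1)}}=0$ collapses $\sum_i(\l_{s_{\pi(i)}}-\l_{s_{\pi(i+1)}})E_{A_i}$ to $\sum_i\l_{s_{\pi(i)}}E_{s_{\pi(i)}}=\sum_{s\in\mathcal S}\l_sE_s=x$; hence $\Psi\circ\Phi=\mathrm{id}_X$, $\Phi$ is injective, and $\Psi|_Y$ inverts it on $Y:=\Phi(X)$. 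Since $\Phi$ is smooth (Lagrange interpolation \eqref{spectral_decomposition}), $\Psi$ linear, and $Y$ an (initial) submanifold of $\prod_i\Gr_{\|A_i\|}$ --- being a $G$-orbit, as in the proof of Theorem~\ref{complex_submanifold} --- the corestriction $\Phi\colon X\to Y$ is a $G$-equivariant diffeomorphism; and evaluating $\Phi$ at the diagonal matrix $\l'$ of \eqref{lambda_prime} (which lies in $X$) gives $(\varpi_{\|A_i\|})_i$, so $Y$ is the orbit named in the statement. As the chain $A_1\subset\dots\subset A_k$ is totally ordered, Theorem~\ref{complex_submanifold} moreover makes $Y$ a complex submanifold of the product.

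Next, transporting the complex structure of $Y$ (it preserves $TY$, again by Theorem~\ref{complex_submanifold}) to $X$ via $\Phi$ gives a $G$-invariant complex structure $\widetilde J$, which I must identify with the $J$ attached to $\pi$. By Theorems~\ref{Js}(b) and~\ref{bijection} it is enough to match the parabolic subalgebras they determine, i.e.\ to compare $+\i$-eigenspaces at $\l$. Deriving \eqref{grassmann_I} shows that for any projector $q$ and any $W\in\LG(\CC)$, the vector $[W,q]$ lies in the $+\i$-eigenspace of the canonical complex structure of $\Gr_{\Tr(q)}$ iff $W$ maps $\mathrm{ran}\,q$ into itself. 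Since $d\Phi_\l$ sends $[W,\l]$ to $([W,E_{A_i}(\l)])_i$ and $Y$ is a complex submanifold, the preimage in $\LG(\CC)$ of the $+\i$-eigenspace of $\widetilde J_\l$ consists of those $W$ preserving every $\mathrm{ran}\,E_{A_i}(\l)$. But $\mathrm{ran}\,E_{A_i}(\l)$ is spanned by the coordinates in the segments $s_{\pi(1)},\dots,s_{\pi(i)}$, so the flag $\mathrm{ran}\,E_{A_1}(\l)\subset\dots\subset\mathrm{ran}\,E_{A_k}(\l)$ is precisely the flag stabilized by the parabolic $\LP$ of Theorem~\ref{bijection} --- this being exactly how $\pi$ produces $\l'$, then $\underline\pi$, then $\LP$. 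Hence $\widetilde J$ and $J$ determine the same parabolic, so $\widetilde J=J$.

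For the $2$-form, each factor $(\Gr_{\|A_i\|},\,c_i\w_{\|A_i\|})$ with $c_i:=\l_{s_{\pi(i)}}-\l_{s_{\pi(i+1)}}$ admits $y_i\mapsto c_iy_i$ as a moment map (because the inclusion $\Gr_m\hookrightarrow\LG^*$ is one for $\w_m$), so $\Psi$ is a $G$-equivariant moment map for $\sum_ic_i\w_{\|A_i\|}$ on the product. By the standard fact that a moment map restricts on each orbit to the pullback of the Kirillov-Kostant-Souriau form, the restriction of $\sum_ic_i\w_{\|A_i\|}$ to $Y$ equals $(\Psi|_Y)^*\w=(\Phi\inv)^*\w$; that is, $\Phi$ pulls it back to $\w$. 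Finally, the Kähler relation $\g(\cdot,\cdot)=\w(\I\,\cdot,\cdot)$ of §3, applied on each Grassmannian via \eqref{-}, gives $\sum_ic_i\g_{\|A_i\|}=\bigl(\sum_ic_i\w_{\|A_i\|}\bigr)(\I^{\mathrm{prod}}\,\cdot,\cdot)$ on the product; restricting to $Y$ (once more using that $\I^{\mathrm{prod}}$ preserves $TY$) and pulling back by $\Phi$ turns this into $\mathrm h=\w(J\,\cdot,\cdot)$. The two maps of \eqref{moment} are then $\Phi$ and $\Psi|_Y=\Phi\inv$ by construction.

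The step I expect to cost the most care is the middle one --- matching the transported complex structure with the \emph{correct} one of the $k!$ structures. This reduces to checking that the flag of ranges $\mathrm{ran}\,E_{A_i}(\l)$ built from the $\pi$-ordered family $(A_i)$ is the one stabilized by $\LP$, which means carefully unwinding the chain $\pi\mapsto\l'\mapsto\underline\pi\mapsto\LP$ of Theorem~\ref{bijection}. The other ingredients are routine: the telescoping identity $\Psi\circ\Phi=\mathrm{id}_X$, the one-line Grassmannian fact $[W,q]\in T^+\iff W(\mathrm{ran}\,q)\subseteq\mathrm{ran}\,q$, and the sign-convention-robust moment-map identification of $\w|_Y$.
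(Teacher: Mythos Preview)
Your proof is correct and follows essentially the same approach as the paper: the summation-by-parts identity for the inverse maps, the moment-map argument identifying the restricted $2$-form with the KKS form, and a base-point comparison of $+\i$-eigenspaces for the complex structure are all there. The only cosmetic difference is that the paper checks the complex structure at $\l'$ (where both the product structure and $J$ are manifestly block upper triangular, so the match is a one-liner) rather than at $\l$ via flag stabilizers as you do; your route requires the extra unwinding of $\pi\mapsto\underline\pi\mapsto\LP$ that you correctly flag, but is equally valid.
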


\begin{proof}
   Formula \eqref{g_omega} defines on the product $P=\prod\!{}_{i=1}^k \Gr_{\|A_i\|}$ a $2$-form which is clearly symplectic and $G$-invariant with moment map given by \eqref{moment}. Its restriction to $Y$ is \emph{a priori} presymplectic with moment map $\Phi$ still given by \eqref{moment}. Equivariance ensures that $\Phi$ maps $Y$ onto a coadjoint orbit, which is $X$ since summation by parts gives $\smash{\sum_{i=1}^k(\l_{s_{\pi(i)}}-\l_{s_{\pi(i+1)}})\varpi_{\|A_i\|}}$ $=$
   $\smash{\l_{s_{\pi(1)}}\varpi_{\|A_1\|}} + 
   \sum_{i=2}^k
   \l_{s_{\pi(i)}}(\varpi_{\|A_i\|} - \varpi_{\|A_{i-1}\|})=\l'$ \eqref{lambda_prime}.
   
   An easy dimension count, or indeed the explicit inverse in \eqref{moment}, then shows that $\Phi$ is a diffeomorphism $Y\to X$ which is symplectic by \cite[{}11.17$\sharp$]{Souriau:1970}. There remains to see that the derivative of $\Phi$ maps (the +i-eigenspace of) the product complex structure at $\varpi=\smash{(\varpi_{\|A_i\|})_{i=1}^k}$ to (the +i-eigenspace of) $J$ at the base point $\l'$. But this boils down to the observation that linear combination takes the block upper triangular matrices in $T^+_{\varpi_m}\!\Gr_m$ to block upper triangular matrices in $T^+_{\l'}X$ \eqref{p}.\qed
\end{proof}

\begin{remark}
   It seems natural to refer to $y$ as an \bit{eigenflag} of the corresponding matrix $x$. Thus we have as many ``eigenflag embeddings'' of $X$ as there are orderings of its eigenvalues, and each induces a different complex structure. Note that by the observation made before \eqref{Y}, $Y$ is algebraic in $\prod\!{}_{i=1}^k \Gr_{\|A_i\|}$ with equations $y_{\|A_{i+1}\|}y_{\|A_{i}\|}=y_{\|A_{i}\|}$ $(i=1,\dots,k-2)$.

\end{remark}

\subsection{Example: The adjoint variety (continued)}

Table 2 details all embeddings when $X$ is the adjoint variety  (§\ref{adjoint_variety}) with $\l=\diag(1,0,0,-1)$; the singleton $\Gr_4=\{\1\}$ could of course be mostly omitted from the notation. Taking the last row as an example, the signature $(2,3)$ metric is
\begin{equation}
   \mathrm h(\d y,\d'y) =\Tr(\d y_2\d'y_2)-2\Tr(\d y_3\d'y_3)
\end{equation}
and gives $\w(\cdot,\cdot)=\mathrm h(\cdot, J\,\cdot)$ with the product complex structure $J\d y=\left(\begin{smallmatrix}
   [\i y_2,\d y_2]\\
   [\i y_3,\d y_3]\\
   [\i y_4,\d y_4]\\
\end{smallmatrix}\right)$.
    
\begin{table}[h]
   \caption{The adjoint variety's $6$ eigenflag embeddings}
   \resizebox{\textwidth}{!}{%
   \begin{tabular}{|c|c|c|c|}
      \hline
      \begin{tabular}{c}permutation\\$\pi\in\mathfrak S_3$\end{tabular}
      & 
      \begin{tabular}{c}base point\\$\l'\in X\cap\LT^*$\end{tabular}
      &
      manifold $Y$
      &
      \begin{tabular}{c}moment map:\\$y\mapsto$\end{tabular}
      \\\hline\hline
      $1,2,3$
	   &
      $
      \vphantom{\dfrac\int\int}
      \left(
      \begin{smallmatrix}1&&\\&\,\,\underline0&\\&&\!\!-1\end{smallmatrix}
      \right)
	   $
      &
      \multirow{2}{68mm}{$\Biggl\{y=
	      \Biggl(\,\begin{matrix}\y_1\\\y_3\\\y_4\end{matrix}\,\Biggr)
	      \in \Gr_1\times \Gr_3\times \Gr_4\,:\,
	      \begin{matrix}\y_3\y_1=\y_1\\[.5ex]\y_4\y_3=\y_3\end{matrix}
	      \Biggr\}
	   $}
	   &
	   $\phantom{-}\y_1+\y_3- y_4$
	   \\\cline{1-2}\cline{4-4}
      $3,2,1$
      &
      $
      \vphantom{\dfrac\int\int}
      \left(
      \begin{smallmatrix}-1&&\\&\underline0&\\&&1\end{smallmatrix}
      \right)
      $
      &
      &
      $-\y_1-\y_3 + y_4$
      \\\hline
      $1,3,2$
	   &
      $
      \vphantom{\dfrac\int\int}
      \left(
      \begin{smallmatrix}1&&\\&\!-1&\\&&\,\underline0\end{smallmatrix}
      \right)
      $
      &
      \multirow{2}{68mm}{$\Biggl\{y=
	      \Biggl(\,\begin{matrix}\y_1\\\y_2\\\y_4\end{matrix}\,\Biggr)
	      \in \Gr_1\times \Gr_2\times \Gr_4\,:\,
	      \begin{matrix}\y_2\y_1=\y_1\\[.5ex]\y_4\y_2=\y_2\end{matrix}
	      \Biggr\}
	   $}
	   &
	   $\phantom{-}2\y_1-\y_2$
	   \\\cline{1-2}\cline{4-4}
	   $3,1,2$
	   &
      $
      \vphantom{\dfrac\int\int}
      \left(
      \begin{smallmatrix}-1&&\\&1&\\&&\underline0\end{smallmatrix}
      \right)
      $
	   &
	   &
	   $-2\y_1+\y_2$
	   \\\hline
      $2,1,3$
	   &
      $
      \vphantom{\dfrac\int\int}
      \left(
      \begin{smallmatrix}\underline0&&\\&\,\,1&\\&&\!\!-1\end{smallmatrix}
      \right)
      $
      &
      \multirow{2}{68mm}{$\Biggl\{y=
	      \Biggl(\,\begin{matrix}\y_2\\\y_3\\\y_4\end{matrix}\,\Biggr)
	      \in \Gr_2\times \Gr_3\times \Gr_4\,:\,
	      \begin{matrix}\y_3\y_2=\y_2\\[.5ex]\y_4\y_3=\y_3\end{matrix}
	      \Biggr\}
	   $}
	   &
	   $-\y_2+2\y_3- y_4$
	   \\\cline{1-2}\cline{4-4}
	   $2,3,1$
	   &
      $
      \vphantom{\dfrac\int\int}
      \left(
      \begin{smallmatrix}\underline0&&\\&\!-1&\\&&\,1\end{smallmatrix}
      \right)
      $
	   &
	   &
	   $\phantom{-}\y_2-2\y_3+ y_4$
	   \\\hline
   \end{tabular}}
\end{table}


\begin{acknowledgements}
   We wish to thank Arnaud Beauville, Ivan Penkov, Jacqueline Rey-Glardon, Loren Spice and Alan Weinstein for very helpful indications.
\end{acknowledgements}

\phantomsection
\addcontentsline{toc}{section}{References}

\end{document}